\newtheorem{theorem}{Theorem}[section]
\newtheorem{proposition}[theorem]{Proposition}
\newtheorem{corollary}[theorem]{Corollary}
\newtheorem{definition}[theorem]{Definition}
\newtheorem{lemma}[theorem]{Lemma}
\newtheorem{remark}[theorem]{Remark}
\DeclareMathOperator*{\argmax}{arg\,max}
\begin{document}

\begin{frontmatter}
\title{Asymptotic properties of the maximum likelihood estimator for multivariate extreme value distributions}

\begin{aug}
  \author{\fnms{Cl\'ement} \snm{Dombry}\thanksref{m1}\ead[label=e1]{clement.dombry@univ-fcomte.fr}}
  \and
  \author{\fnms{Sebastian} \snm{Engelke}\thanksref{m2}\ead[label=e2]{sebastian.engelke@epfl.ch}}
  \and
  \author{\fnms{Marco} \snm{Oesting}\thanksref{m3}\ead[label=e3]{oesting@mathematik.uni-siegen.de}}
  
  \affiliation{Universit\'e Bourgogne Franche--Comt\'e\thanksmark{m1}, Ecole Polytechnique F\'ed\'erale de Lausanne\thanksmark{m2} and Universit\"at Siegen\thanksmark{m3}}

  \address{Universit\'e Bourgogne Franche--Comt\'e\\
    Laboratoire de Math\'ematiques de Besan\c{c}on\\
    UMR CNRS 6623\\
    16 Route de Gray\\
    25030 Besan\c{c}on cedex\\
    France\\
    \printead{e1}}  
  \address{Ecole Polytechnique F\'ed\'erale de Lausanne\\
    EPFL-FSB-MATHAA-STAT\\
    Station 8\\
    1015 Lausanne\\
    Switzerland\\
    \printead{e2}}
  \address{Universit\"at Siegen\\
    Department of Mathematics\\
    Walter-Flex-Str.~3\\
    57068 Siegen\\
    Germany\\
    \printead{e3}}

  \runauthor{C.~Dombry, S.~Engelke and M.~Oesting}
\end{aug}

\begin{abstract}
  Max-stable distributions and processes are important models for extreme events and the assessment of tail risks. 
  The full, multivariate likelihood of a parametric max-stable distribution is complicated and only recent advances 
  enable its use. The asymptotic properties of the maximum likelihood estimator in multivariate extremes are mostly unknown.
  In this paper we provide natural conditions on the exponent function and the angular measure of the max-stable distribution
  that ensure asymptotic normality of the estimator. We show the effectiveness of this result by applying it to popular 
  parametric models in multivariate extreme value statistics and to the most commonly used families of spatial max-stable processes.
\end{abstract}

\begin{keyword}[class=MSC]
\kwd[Primary: ]{62F12 }
\kwd[Secondary: ]{60G70}
\end{keyword}

\begin{keyword}
\kwd{asymptotic normality}
\kwd{differentiability in quadratic mean}
\kwd{max-stable distribution}
\end{keyword}

\end{frontmatter}

\section{Introduction} \label{sec:intro}

The theory of multivariate and spatial extremes has been rapidly evolving
in the last decades. The two main approaches are to consider threshold exceedances
resulting in multivariate Pareto distributions \citep{roo2006}, or to approximate 
componentwise block maxima by max-stable distributions.
A $k$-dimensional random vector $Z = (Z_1,\dots,Z_k)$ with unit Fr\'echet margins is
called max-stable if it arises as the normalized limit of the componentwise maxima of
some sequence of independent, identically distributed random vectors. This property 
makes max-stable distributions a natural model in statistics to describe the joint 
behavior of multivariate extreme events. Extensions to the continuous domain, the 
so-called max-stable processes, are widely applied in meteorology and hydrology to
assess the risk of temporal or spatial dependence between extreme observations 
\citep[see, e.g.,][]{bui2008, dav2012b, eng2014, asa2015, oes2016}.

The distribution function $F$ of a simple max-stable random vector $Z$
can be written in the form $F(z) =\exp\{-V(z)\}$, $z\in (0,\infty)^k$, where $V$ is
the so called exponent function. It describes the dependence between the components
of $Z$ and satisfies a homogeneity property \citep{res2008}. If $F$ admits a 
continuous density $f$ then it is obtained by taking partial derivatives of $F$ 
with respect to all components. By Fa\`a di Bruno formula, this yields
\begin{equation} \label{eq:llh_Z}
  f(z) = \sum_{\tau\in \mathscr{P}_k} \exp\{-V(z)\} \prod_{j=1}^{|\tau|} \{-\partial_{\tau_j}V(z)\},
\end{equation}
where $\mathscr{P}_k$ is the set of all partitions $\tau = \{\tau_1,\ldots,
\tau_{|\tau|}\}$ of $\{1,\ldots,k\}$ and $\partial_{\tau_j}V(\cdot)$ 
denotes the partial derivative of the exponent function $V$ with respect to the
variables $z_i$, $i\in \tau_j$.

In parametric extreme value statistics the distribution $F$ is modeled
by a parametric family $\{F_\theta,\ \theta \in \Theta\}$ of max-stable distribution 
functions with corresponding densities $f_\theta(z)$.
When considering the maximum likelihood estimator 
\begin{align*}
  \hat\theta_n^{\rm mle} = \argmax_{\theta\in\Theta} \prod_{i=1}^n f_\theta(Z^{(i)})
\end{align*}
for independent observations $Z^{(1)},\dots, Z^{(n)}$ of $Z$, the 
combinatorial explosion of $\mathscr{P}_k$ renders the computation of the 
likelihood and its maximization challenging. Indeed, the 
number of terms in \eqref{eq:llh_Z} equals the $k$th Bell number which grows
super-exponentially in the dimension~$k$. A common way to avoid this problem is
to consider composite pairwise likelihoods instead of full likelihoods. 
Computation of this composite likelihood relies only on the densities of 
bivariate sub-vectors of $Z$ \citep{PRS10}. Apart from the fact that it is 
misspecified, this approach can lead to considerable losses in efficiency \citep{CHG16}.
If additional information on the partition $\tau$ is available, then a
simplified likelihood consisting of only one summand in \eqref{eq:llh_Z} can be
used \citep{ST05, wadsworth14}.

A recent approach allowing the use of full likelihoods is to introduce
a prior distribution $\pi_{\mathrm{prior}}(\mathrm{d}\theta)$
on $\Theta$ and to treat the partition $\tau$ as a latent variable in a 
hierarchical Bayesian framework \citep{thi2015,deo2016}. 
Given observations $Z^{(1)},\ldots,Z^{(n)}$, the posterior distribution 
\[
\pi_{\mathrm{post}}(\mathrm{d}\theta\mid Z^{(1)},\ldots,Z^{(n)})\propto \pi_{\mathrm{prior}}(\mathrm{d}\theta)\, \prod_{i=1}^n f_\theta(Z^{(i)}) 
\]
can be assessed by Markov chain Monte Carlo simulations and point estimators for 
$\theta$ can be obtained as a functional of the posterior distribution such as the
mean or median. This method results in a significant gain of  efficiency compared to 
composite pairwise likelihood methods \citep{deo2016}. The methodology for computing 
the Bayes likelihood estimator relies on conditional simulation \citep{DEMR13}. 
Besides the Bayesian framework, inference based on full likelihood has recently 
become available also in a frequentist setting via a stochastic Expectation-Maximization
algorithm proposed by \citet{dombry-etal-17}.

Even in the univariate case, the asymptotic theory of likelihood estimators in 
extreme value statistics is non-standard. The fact that the support of the 
generalized extreme value distribution depends on the parameter values makes 
the theory difficult \citep{smi1985,BS16a,DF17}. In the multivariate case, the 
asymptotic properties of likelihood estimators for the parameter $\theta$ of
the max-stable parametric family $\{F_\theta,\ \theta \in \Theta\}$ are mostly
unknown. Asymptotic here always means that the number $n$ of independent 
observations of the max-stable vector $Z$ tends to infinity. The composite 
maximum likelihood method in \cite{PRS10} is asymptotically normal under 
certain regularity conditions, but verifying these conditions seems difficult 
and has not been done for any of the existing models. For the full maximum 
likelihood estimator $\hat\theta_n^{\rm mle}$ in dimensions $k\geq2$, no theory 
exists to the best of our knowledge, with the exception of \cite{taw1988a} who
treats the two-dimensional logistic model. \cite{bie2016} consider asymptotic 
normality under technical assumptions that, again, are hard to verify.
The main reason for the lack of results in the literature is the complicated 
form of the likelihood \eqref{eq:llh_Z} that makes the theoretical analysis 
difficult even for small dimensions $k$. Moreover, until recently, the use of
full likelihoods was computationally infeasible because of the explosion of the
number of summands.  

In this paper we provide general conditions for the consistency and asymptotic normality 
of the maximum likelihood estimator $\hat\theta_n^{\rm mle}$ for multivariate max-stable
distributions. As a preliminary step, we obtain in Section \ref{sec:metho} necessary and 
sufficient conditions for the existence of the density of 
a simple max-stable distribution $Z$. The conditions are natural in the sense that they 
are formulated in terms of the exponent function and the angular density of the max-stable 
model, two objects that are typically used to characterize the distribution of~$Z$.

Modern likelihood estimation theory relies on an important regularity property 
called differentiability in quadratic mean. In Section \ref{sec:quad} we provide
simple conditions for this property, again stated in terms of the exponent 
function and the angular density. Most importantly, these conditions can be 
verified for all popular parametric models and are sufficiently general to apply
to other models as well. The consistency and asymptotic normality for the maximum
likelihood estimator $\hat\theta_n^{\rm mle}$ are then established in Section
\ref{sec:MLE}.

In Section \ref{sec:exam} we apply our general conditions to show asymptotic
normality for the most popular models in multivariate extreme value theory, 
such as the $k$-dimensional logistic, Dirichlet and H\"usler--Reiss
distributions. In the spatial domain we cover common parameterizations of the 
Schlather and Brown--Resnick max-stable processes.

\section{Densities of max-stable distributions} \label{sec:metho}

\subsection{Max-stable distributions}\label{sec:max}
A $k$-dimensional random vector $Z= (Z_1,\dots,Z_k)$ with distribution function $F$
is called max-stable if it arises as the
limit in distribution of the normalized, componentwise maxima of
some sequence of independent, identically distributed random vectors.
The distribution of $Z$ can be assumed to be simple, that is, to have standard Fr\'echet
margins $\mathbb P(Z_i \leq z) = \exp( - 1/z)$ \citep[Chap.~5]{res2008}.
The max-stability then means that for $n$ independent copies $Z^{(1)},\dots, Z^{(n)}$
of $Z$, the vector of componentwise maxima $\max_{j=1,\dots, n} Z^{(j)}$ has the
same distribution as~$nZ$. 

The law of a simple max-stable random vector $Z$ can be characterized
by any of the following objects.
\begin{itemize}
\item The exponent function $V:E^k \to (0,\infty)$ of $Z$, where $E^k = [0,\infty)^k\setminus\{0\}$,
  is defined by
  \[
  V(z) = -\log F(z), \quad z\in E^k.
  \]
  It is homogeneous of order $-1$, i.e., $V(uz)=u^{-1}V(z)$ for all $u>0$, and 
  satisfies the normalization condition $V(\infty,\ldots, 1,\ldots,\infty)=1$.
\item
  The exponent measure $\Lambda$ on $E^k$ is related to $V$ by
  \begin{equation}\label{eq:V}
  \Lambda\left( E^k\setminus [0,z]\right)=V(z), \quad z\in E^k.
  \end{equation}
  The homogeneity property reads $\Lambda(uA)=u^{-1}\Lambda(A)$ for all Borel set $A\subset E^k$ and $u>0$. 
	The exponent measure is normalized by $\Lambda(\{z\in E^k: \ z_i>1\})=1$, for any $i=1,\dots,k$.
\item 
  The angular measure $H$ of $Z$ is a probability measure on the simplex
  $$S^{k-1}=\{w\in[0,1]^k: \ w_1+\cdots+w_k=1\}$$ and related to the exponent measure by
  \begin{equation}\label{eq:H}
  \Lambda( \{z\in E^k: \ \|z\|_1 > r, \ z/\|z\|_1 \in B \})= k r^{-1}H(B)
  \end{equation}
  for any Borel set $B\subset S^{k-1}$ and $r>0$. It satisfies the moment constraint
  \[
  \int_{S^{k-1}}w_iH(dw)=\frac{1}{k}.
  \]
\end{itemize}
The normalization constraints in the above definitions all ensure that the 
marginal distributions of $Z$ are standard Fr\'echet. The exponent measure
$\Lambda$ is the intensity measure of a Poisson point process $\Pi = \{\psi _i: i\in \mathbb N\}$
on the space $E^k$. It is generating the max-stable distribution $Z$ in the sense
that $Z = \max_{i\in\mathbb N} \psi_i$ is the componentwise maximum of all points in $\Pi$.
Many developments in multivariate extreme value statistics are based on a
detailed analysis of the point process $\Pi$, including exact and conditional
simulation of max-stable models \citep{DEM13,dom2016}.

There exist many parametric classes of max-stable distributions, such as the 
logistic \citep{gum1960}, the H\"usler--Reiss \citep{hue1989} and the Dirichlet
models \citep{col1991, bol2007}. More recently, max-stable distributions have been
extended to the temporal and spatial domain and popular parametric models include
the Schlather \citep{sch2002}, the Brown--Resnick \citep{bro1977,kab2009} and the 
extremal-$t$ processes \citep{opi2013}. All these models will be defined in Section~\ref{sec:exam}
where we apply our general results to specific examples.

\subsection{Existence of densities}

In this section, we investigate the existence of the density of the distribution
of a $k$-dimensional max-stable random vector, i.e., the existence of a measurable
function $f: (0,\infty)^k \to [0,\infty)$ such that
$$ F(z) = \int_{(0,z)} f(y) \, \mathrm{d}y, \quad z \in (0,\infty)^k, $$
where $F$ is the cumulative distribution function of $Z$ and, thus, of the form 
$F(z)=\exp\{-V(z)\}$. If we assume that the multivariate derivative
\[
f(z)=\frac{\partial^k F}{\partial z_1\cdots\partial z_k}(z), \quad z \in (0,\infty)^k,
\]  
exists and is continuous, then it equals the desired density and it is given by the 
right hand side of Equation~\eqref{eq:llh_Z}. This is, however, only a sufficient
condition for the existence of a density. In the following, we provide necessary 
and sufficient conditions for existence of the density of $Z$ in terms the existence
of densities of the exponent measure $\Lambda$ and the angular measure $H$. Since
these objects are typically used to characterize families of max-stable 
distributions (cf., Section \ref{sec:max}), the conditions can readily be verified
for all existing models.

We first introduce some notation. For $z\in E^k$ and $I\subset\{1,\ldots,k\}$, we denote 
by $z_I$ and $z_{I^c}$ the sub-vectors of $z$ with components in $I$ and 
$I^c=\{1,\ldots,k\}\setminus I$ respectively. The set $E^k$ is the disjoint union 
of the faces
\[
E^k_I=\{z\in E^k: \ z_i>0 \mbox{ for } i \in I \mbox{ and } z_{i}=0 \mbox{ for } i \notin I\}, \quad \emptyset\neq I\subset \{1,\ldots,k\}.
\]
For $I$ with cardinality $|I| = d\in\{1,\ldots,k\}$, $E^k_I$ is a $d$-dimensional face of
$E^k$ and we denote by $\mu_I(\mathrm{d}z)=\mathrm{d}z_I\delta_0(\mathrm{d}z_{I^c})$,
the Lebesgue measure on  $E^k_I$. Similarly, the simplex $S^{k-1}$ can be decomposed
into faces
\[
S^{k-1}_I=\{w\in S^{k-1}: \ w_i>0 \mbox{ for } i \in I \mbox{ and } w_i=0 \mbox{ for } i\notin I\}, \quad \emptyset\neq I\subset \{1,\ldots,k\}.
\]
For $|I|=d \in\{1,\ldots,k\}$, $S^{k-1}_I$ is a $(d-1)$-dimensional face of 
$S^{k-1}$ and we denote by $\sigma_I(\mathrm{d}w)=\mathrm{d}w_I \delta_0(\mathrm{d}w_{I^c})$ 
the Lebesgue measure on $S^{k-1}_I$.
In the case $d=1$, the face $S^{k-1}_I$ is a  vertex of the simplex and 
$\sigma_I(\mathrm{d}w)$ denotes the Dirac mass at this vertex; see \cite{col1991} for details
on angular densities.

\begin{proposition}\label{prop1}
The following statements are equivalent:
\begin{itemize}
\item[i)] the multivariate simple max-stable distribution $F$ admits a density $f$;
\item[ii)] the exponent measure $\Lambda$ admits a density $\lambda_I$ on each face $E^k_I$, i.e.,
\[
\Lambda(A)=\sum_{\emptyset\neq I\subset\{1,\ldots,k\}} \int_{A} \lambda_I(z)\mu_I(\mathrm{d}z),\quad\mbox{for all Borel set } A\subset E^k;
\]
\item[iii)] the angular measure $H$ admits a density $h_I$ on each face $S^{k-1}_I$, i.e.,
\[
H(B)=\sum_{\emptyset \neq I\subset\{1,\ldots,k\}} \int_{B} h_I(w)\sigma_I(\mathrm{d}w),\quad\mbox{for all Borel set } B\subset S^{k-1}.
\]
\end{itemize}
In this case, the density $f(z)$, $z\in (0,\infty)^k$, is given by the right hand side of \eqref{eq:llh_Z} with
\begin{eqnarray}
V(z)&=&\sum_{\emptyset\neq I\subset\{1,\ldots,k\}} \int_{E^k\setminus [0,z] } \lambda_I(u)\mu_I(\mathrm{d}u),\label{eq:V2}\\
-\partial_{\tau_j}V(z)&=& \sum_{\tau_j\subset I\subset\{1,\ldots,k\}}  \int_{[0,z_{\tau_j^c})} \lambda_I(z_{\tau_j},u_j)\mathrm{d}u_{j,I}\delta_0(\mathrm{d}u_{j,I^c}) .\label{eq:Vtau}
\end{eqnarray}
Furthermore, the densities $\lambda_I$ and $h_I$ are related by the following homogeneity relations: 
\begin{equation}
\lambda_I(z)=k \|z\|_1^{-|I|-1}h_I(z/\|z\|_1),\quad z\in E_I^k. \label{eq:lambda}
\end{equation}
\end{proposition}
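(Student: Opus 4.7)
My plan is to run the equivalences cyclically: prove (ii) $\Leftrightarrow$ (iii) by a polar change of variables (which also yields \eqref{eq:lambda}), then (ii) $\Rightarrow$ (i) by direct computation of \eqref{eq:V2}--\eqref{eq:Vtau} and the Fa\`a di Bruno formula \eqref{eq:llh_Z}, and finally (i) $\Rightarrow$ (ii) by extracting each $\lambda_I$ from the density $f$.

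For (ii) $\Leftrightarrow$ (iii), I parametrize each face $E^k_I$ in polar form $z = rw$ with $r = \|z\|_1$ and $w \in S^{k-1}_I$. A short Jacobian computation gives $\mu_I(dz) = r^{|I|-1}\,dr\,\sigma_I(dw)$, while \eqref{eq:H} combined with the homogeneity $\Lambda(uA) = u^{-1}\Lambda(A)$ forces the global polar decomposition $\Lambda(dr,dw) = k r^{-2}\,dr\,H(dw)$. Matching these, $\Lambda$ restricted to $E^k_I$ admits a density $\lambda_I$ w.r.t.\ $\mu_I$ iff $H$ restricted to $S^{k-1}_I$ admits a density $h_I$ w.r.t.\ $\sigma_I$, and the two are linked by \eqref{eq:lambda}.

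For (ii) $\Rightarrow$ (i), formula \eqref{eq:V2} is immediate from the face decomposition $E^k = \bigsqcup_I E^k_I$ applied to \eqref{eq:V}. To obtain \eqref{eq:Vtau}, I differentiate each term $\int_{E^k_I\setminus[0,z]}\lambda_I\,d\mu_I$ with respect to the variables $z_i$ for $i \in \tau_j$: faces $I \not\supset \tau_j$ drop out (the integrand does not depend on the relevant $z_i$), while for $I \supset \tau_j$ iterated applications of the fundamental theorem of calculus convert each differentiation into a pointwise evaluation at $u_i = z_i$, with the remaining coordinates in $I \setminus \tau_j$ integrated over $[0, z_{I\setminus\tau_j}]$. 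Substituting these partials into the Fa\`a di Bruno expansion of $\partial^k F / \partial z_1\cdots\partial z_k$ for $F = e^{-V}$ produces the right-hand side of \eqref{eq:llh_Z}; Fubini together with the boundary behavior $F(z)\to 0$ as any $z_i\to 0$ then confirm that this nonnegative integrable function integrates back to $F$, so it is the density of $Z$.

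The direction (i) $\Rightarrow$ (ii) is the main obstacle. My approach is to recover the densities $\lambda_I$ from $f$ by peeling faces in decreasing dimension. Absolute continuity of $F$ provides $f = \partial^k F / \partial z_1\cdots\partial z_k$ a.e.; writing $F = e^{-V}$ and inverting the Fa\`a di Bruno sum in \eqref{eq:llh_Z} recursively on partition-block size yields the mixed partial $-\partial_{\{1,\ldots,k\}} V$ a.e., which is the desired density $\lambda_{\{1,\ldots,k\}}$ on the top face. For a face $I \subsetneq \{1,\ldots,k\}$, I extract $\lambda_I$ from the limit $\lim_{z_{I^c}\to 0}(-\partial_I V(z))$, since by \eqref{eq:Vtau} all contributions from strictly larger faces then vanish. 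The delicate points are to upgrade these a.e.\ identities to honest densities of $\Lambda$ restricted to each face, and to justify the pointwise boundary limits, which rely on the homogeneity of $V$ and the regularity inherited from absolute continuity of $F$.
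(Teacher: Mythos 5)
Your treatment of (ii) $\Leftrightarrow$ (iii) matches the paper's (polar factorization plus homogeneity), and your (ii) $\Rightarrow$ (i) is a plausible, more self-contained variant of what the paper does: the paper simply substitutes the face densities into the exact distributional formula \eqref{eq:general} inherited from the point-process representation (Theorem 3.1 of Dombry and Eyi-Minko), whereas you re-derive the density by differentiating $e^{-V}$ and integrating back. That route can be made to work, but note that under (ii) alone $V$ is not $C^k$, so each iterated application of the fundamental theorem of calculus and the final Fubini step need explicit justification of absolute continuity in each variable; this is real work that your sketch compresses into one sentence.

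The genuine gap is in (i) $\Rightarrow$ (ii). Recovering $-\partial_{\{1,\ldots,k\}}V$ almost everywhere from $f$, and $\lambda_I$ on lower faces from boundary limits of $-\partial_I V$, can at best identify the \emph{absolutely continuous part} of $\Lambda$ on each face: a measure with a nontrivial singular part still has an a.e.\ derivative (the singular part differentiates to zero a.e.), so the a.e.\ identities you extract are consistent with $\Lambda$ carrying singular mass and give no mechanism to exclude it. Your lower-face step is moreover circular, since the limit $\lim_{z_{I^c}\to 0}(-\partial_I V(z))$ isolating $\lambda_I$ is computed from formula \eqref{eq:Vtau}, which is only available \emph{under} hypothesis (ii). The actual content of this implication is precisely ruling out a singular part, and the paper does it by proving the contrapositive with a probabilistic argument you do not have: if $\Lambda(A_I)>0$ for some $A_I\subset E^k_I$ with $\mu_I(A_I)=0$, then the event that the Poisson process $\Pi$ with intensity $\Lambda$ has exactly one point in $A_I$ and all other points with smaller $I$-components has positive probability, and on that event $Z$ lies in a Lebesgue-null set, so no density can exist. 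Some idea of this kind (or an equivalent disintegration argument showing the law of $Z$ dominates the restriction of $\Lambda$ to each face in the relevant sense) is needed to close your proof; the analytic peeling alone will not do it.
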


\begin{proof}
Using a detailed analysis of the Poisson point process representation of max-stable 
processes, \cite{DEM13} derived a general formula for their finite dimensional 
distributions. As a consequence of Theorem 3.1 therein, we have for any Borel set 
$A\subset (0,\infty)^k$,
\begin{equation}\label{eq:general}
\mathbb{P}(Z\in A)=\int 1_{\{z\in A\}}\exp\{-V(z)\}\sum_{\tau\in\mathscr{P}_k} \prod_{j=1}^{|\tau|}\int 1_{\{u_j<z_{\tau_j^c}\}}\Lambda(\mathrm{d}z_{\tau_j},\mathrm{d}u_j).
\end{equation}
where $z_{\tau_j}$ and $z_{\tau_j^c}$ denote the restrictions of the vector $z$ to
components in $\tau_j$ and $\tau_j^c=\{1,\ldots,k\}\setminus\tau_j$, respectively,
and the inner integrals are with respect to $u_j\in [0,\infty)^{|\tau_j^c|}$, 
$j=1,\ldots,|\tau|$. For instance, in the bivariate case $k=2$, the sum has only
two terms corresponding to $\tau=\{1,2\}$ and $\tau=\{\{1\},\{2\}\}$ and 
Eq.~\eqref{eq:general} reads 
\begin{align*}
\mathbb{P}(Z\in A)=&\int 1_{\{z\in A\}}\exp\{-V(z)\}\Lambda(\mathrm{d}z_1,\mathrm{d}z_2)\\
&\quad + \int 1_{\{z\in A\}}\exp\{-V(z)\} 1_{\{u_1<z_{2}\}}1_{\{u_2<z_{1}\}}\Lambda(\mathrm{d}z_{1},\mathrm{d}u_1)\Lambda(\mathrm{d}u_{2},\mathrm{d}z_2).
\end{align*}
Note that formula~\eqref{eq:general} is slightly different from that in Theorem 3.1
in \cite{DEM13} where a disintegration of $\Lambda$ into marginal and conditional 
distributions is used. The proof of Proposition~\ref{prop1} now consists in 
checking whether formula \eqref{eq:general} defines an absolutely continuous 
distribution or not. 
\medskip\noindent

We first prove that condition \textit{ii)} implies \textit{i)}. Assuming that
$\Lambda$ has a density $\lambda_I$ on each face $E_I^k$, we can plug this density 
into \eqref{eq:general} and, for any Borel set $A\subset (0,\infty)^k$, we obtain
\begin{align*}
\mathbb{P}(&Z\in A)\\
&= \int 1_{\{z\in A\}} \exp(-V(z))\sum_{\tau\in\mathscr{P}_k} \prod_{j=1}^{|\tau|} \sum_{\emptyset \neq   I\subset\{1,\ldots,k\}} \int 1_{\{u_{j}<z_{\tau_j^c}\}}\lambda_I(z_{\tau_j},u_{j})\mu_I(\mathrm{d}z_{\tau_j},\mathrm{d}u_{j})\\
&= \int 1_{\{z\in A\}}  \exp(-V(z))\sum_{\tau\in\mathscr{P}_k} \prod_{j=1}^{|\tau|} \sum_{\tau_j \subset  I\subset\{1,\ldots,k\}} \int 1_{\{u_{j}<z_{\tau_j^c}\}}\lambda_I(z_{\tau_j},u_{j})\mathrm{d}z_{\tau_j}\mathrm{d}u_{j,I}\delta_0(\mathrm{d}_{u_{j,I^c}})\\
&= \int 1_{\{z\in A\}} \left\{ \exp(-V(z))\sum_{\tau\in\mathscr{P}_k} \prod_{j=1}^{|\tau|} \sum_{\tau_j \subset  I\subset\{1,\ldots,k\}} \int 1_{\{u_{j}<z_{\tau_j^c}\}}\lambda_I(z_{\tau_j},u_{j})\mathrm{d}u_{j,I}\delta_0(\mathrm{d}_{u_{j,I^c}})\right\}\, \mathrm{d}z.
\end{align*}
For the second equality, note that the terms with $I$ not containing $\tau_j$ have 
a null contribution to the sum since then the components of $z$ in $\tau_j^c\cap I$
are set to $0$ while $z\in A\subset (0,\infty)^k$. This shows that $Z$ has density
\[
f(z)= \exp(-V(z))\sum_{\tau\in\mathscr{P}_k} \prod_{j=1}^{|\tau|} \sum_{\tau_j \subset  I\subset\{1,\ldots,k\}} \int 1_{\{u_{j}<z_{\tau_j^c}\}}\lambda_I(z_{\tau_j},u_{j})\mathrm{d}u_{j,I}\delta_0(\mathrm{d}_{u_{j,I^c}}),
\]
which corresponds to Eq.~\eqref{eq:llh_Z} with $-\partial_{\tau_j}V(z)$ given by
Eq.~\eqref{eq:Vtau}. Furthermore, Eq.~\eqref{eq:V} together with condition 
\textit{ii)} imply Eq.~\eqref{eq:V2}.

We next prove that if condition \textit{ii)} is not satisfied, then $Z$ has no
density. Assume that there is $I\subset \{1,\ldots,k\}$ and $A_I\subset E_I^k$ 
such that $\mu_I(A_I)=0$ and $\Lambda(A_I)>0$. Without loss of generality, we may
suppose that $I=\{1,\ldots,p\}$ and 
$A_I\subset [\varepsilon, \infty)^p\times\{0\}^{k-p}$, for some 
$p\in\{1,\dots, k\}$. Let $A$ be the set of all vectors $z\in E^k$ such that 
$(z_1,\ldots,z_p,0,\ldots,0)\in A_I$. The condition $\mu_I(A_I)=0$ implies
that $A$ has Lebesgue measure $0$. We will show that $\mathbb{P}(Z\in A)>0$ so 
that $Z$ has no density. Instead of using Eq.~\eqref{eq:general}, it is easier 
to reason on the Poisson point process representation of $Z$. Let $\Pi$ be a 
Poisson point process on $E^k$ with intensity $\Lambda$ such that $Z$ has the
same distribution as the componentwise maximum of the points of $\Pi$. Consider
the event
 \[
\left\{ \Pi \mbox{ has exactly one point in } A_I \mbox{ and all the other points are in } [0,\varepsilon)^p\times [0,\infty)^{k-p}\right\}.
\]
This event is equal to
\[
\left\{\Pi(A_I)=1 \mbox{ and } \Pi\left([\varepsilon,\infty)^p\times [0,\infty)^{k-p}\setminus A_I \right)=0\right\},
\]
and it has probability
\begin{align*}
\Lambda(A_I)&\exp\{-\Lambda(A_I)\}\exp\left\{-\Lambda([\varepsilon,\infty)^p\times [0,\infty)^{k-p}\setminus A_I)\right\}\\
&=\Lambda(A_I)\exp\left\{-\Lambda([\varepsilon,\infty)^p\times [0,\infty)^{k-p})\right\}>0.
\end{align*}
Furthermore, on this event, we have $Z\in A$ because the $I$-components of $Z$ are given exactly by the unique 
point in $A_I$ since all the other points have lower $I$-components. It follows that  $\mathbb{P}(Z\in A)>0$ and 
that $Z$ has no density.

Finally, the equivalence of conditions \textit{ii)} and \textit{iii)} follows from
the homogeneity property of $\Lambda$ and from the factorization of the exponent 
measure given in Eq.~\eqref{eq:H}. It follows that $\Lambda$ has a density on all 
sub-faces of $E^k$ if and only if $H$ has a density on all sub-faces of $S^{k-1}$. 
Furthermore the homogeneity property implies that the densities $\lambda_I$ and 
$h_I$ are related by Eq.~\eqref{eq:lambda}.
\end{proof}

For illustration of the above result, let us consider the case $k=2$. 
Prop.~\ref{prop1} states that a simple max-stable vector $Z = (Z_1,Z_2)$ admits
a density if and only if its angular measure has a density in the interior of 
$S^1$, and it possibly has point masses on the two vertices of $S_1$. It thus 
follows that, for instance, the asymmetric logistic distribution 
\citep{taw1988a} always possesses a density. On the other hand, for two 
independent standard Fr\'echet distributed random variables $X_1$ and $X_2$, 
the simple max-stable vector $Z = (X_1, \max\{X_1,X_2\}/2)$ does not have a 
density since its angular measure has a point mass in the interior of $S^1$.

\section{Asymptotic results} \label{sec:asymptotics}

In this section, we will provide the main results of this paper on the
asymptotic properties of likelihood-based estimators in multivariate extremes. 
Throughout this section, we consider independent observations $Z^{(1)},\ldots,Z^{(n)}$ 
stemming from a parametric family of $k$-dimensional max-stable distributions 
$\{f_\theta, \, \theta \in \Theta\}$ with parameter space $\Theta \subset \mathbb{R}^D$ 
and maximum likelihood estimators $\hat\theta_n^{\mathrm{mle}}$ based on the 
likelihood $\prod_{i=1}^n f_\theta(Z^{(i)})$. Under natural assumptions on the 
parametric family we show asymptotic normality and efficiency of the estimator as
$n \to \infty$. For independent observations $Z^{(1)},\ldots,Z^{(n)}$ with 
distribution $f_{\theta_0}$, where $\theta_0 \in \mathrm{int}(\Theta)$ is in the
interior of the parameter space, we will show that 
\begin{equation}\label{eq:asymptotic-normality} 
\sqrt{n} (\hat\theta_n^{\mathrm{mle}} - \theta_0) \stackrel{d}\longrightarrow \mathcal{N}(0,I_{\theta_0}^{-1}),\quad\mbox{as } n\to\infty,
\end{equation}
where $\stackrel{d}\longrightarrow$ denotes convergence in distribution and 
$I_{\theta_0}$ is the Fisher information matrix,
  $$ I_{\theta_0} = \int \left\{\partial_\theta \log f_{\theta_0}(z) \right\}
             \left\{\partial_\theta \log f_{\theta_0}(z) \right\}^T f_{\theta_0}(z)\,\mathrm{d}z,$$
which is assumed to be  non-singular.

Our proof makes use of the theory in \citet{vdV98}, where a key tool is the notion 
of differentiability in quadratic mean, which we discuss in Subsection 
\ref{sec:quad}. The asymptotic behavior of the maximum likelihood estimator is 
then discussed in Section \ref{sec:MLE}.

\subsection{Differentiability in quadratic mean}\label{sec:quad}

For parametric statistical models, the asymptotic theory of likelihood-based 
estimators relies strongly on the following regularity property, called 
differentiability in quadratic mean.
\begin{definition} 
Let $\theta_0\in \mathrm{int}(\Theta)$ be in the interior of $\Theta$. The 
parametric statistical model $\{f_\theta, \, \theta\in\Theta\}$ is differentiable
in quadratic mean at $\theta_0$ if there exists a measurable function 
$\dot\ell_{\theta_0}:\mathbb{R}^k\to\mathbb{R}^D$ such that
\begin{equation*}
\int_{\mathbb{R}^k}\left( \sqrt{f_{\theta_0+h}(z)}-\sqrt{f_{\theta_0}(z)}-\frac{1}{2}h^T \dot\ell_{\theta_0}(z)\sqrt{f_{\theta_0}(z)} \right)^2\mathrm{d}z=o(\|h\|^2),\quad h\to 0.
\end{equation*}
\end{definition}

Note that if the parametric statistical model $\{f_\theta, \, \theta\in\Theta\}$
is differentiable in quadratic mean at~$\theta_0$ and the mapping 
$\theta \mapsto \sqrt{f_\theta(z)}$ is differentiable at $\theta_0$ for 
$f_{\theta_0}$-almost every $z$, we have $\dot\ell_{\theta_0}(z) = \partial_\theta \log f_{\theta_0}(z)$.

Differentiability in quadratic mean ensures that the score $\dot\ell_{\theta_0}$
defines a centered random variable with finite variance equal to the Fisher 
information matrix, that is
\[
\int \dot\ell_{\theta_0}(z) f_{\theta_0}(z)\mathrm{d}z=0\quad\mbox{and}\quad \int \dot\ell_{\theta_0}(z)\dot\ell_{\theta_0}(z)^T f_{\theta_0}(z)\mathrm{d}z=I_{\theta_0}.
\]
Furthermore, the likelihood allows for an asymptotic expansion resulting in the 
so-called local asymptotic normality of the model. More precisely, for 
$Z^{(1)},\ldots,Z^{(n)}$ independent with distribution $f_{\theta_0}$, we have
\[
\log \prod_{i=1}^n \frac{f_{\theta_0+h/\sqrt{n}}}{f_{\theta_0}}(Z^{(i)})=\frac{1}{\sqrt{n}}\sum_{i=1}^n h^T\dot\ell_{\theta_0}(Z^{(i)})-\frac{1}{2}h^TI_{\theta_0}h+o_{P}(1),
\]
where $o_P(1)$ is a remainder term that converges in probability to zero if 
$h\to 0$. Local asymptotic normality is fundamental in the study of the asymptotic
properties of maximum likelihood estimator; see Chapter 7 and Theorem 7.2 in 
\citet{vdV98} for more details. 

Due to the complex structure of max-stable models, their differentiability 
in quadratic mean is difficult to analyze. Even in the one-dimensional
case, \citet{BS16a} have proved only recently that the generalized extreme 
value distribution, where $\theta$ consists of the location, the scale and 
the shape parameter, is differentiable in quadratic mean at $\theta_0$ if and 
only if the shape is greater than $-1/2$. 
To our best knowledge, no general results are available in a multivariate 
setting. We focus here on the case of simple multivariate max-stable 
distributions, where the term simple means that the margins are normalized to
standard Fr\'echet distributions. In Propositions \ref{prop:DQM1} and 
\ref{prop:DQM2} below, we provide natural conditions on the exponent functions 
$V_\theta$ and on the angular densities $h_\theta$, respectively, that ensure 
differentiability in quadratic mean.

\begin{proposition}\label{prop:DQM1}
Let $\{f_\theta,\,\theta\in\Theta\}$ be a parametric family of densities of 
multivariate simple max-stable distributions. Denote by $V_\theta$ the 
exponent function associated to $f_\theta$. Let $\theta_0\in \mathrm{int}(\Theta)$
and assume that there is a neighborhood $\Theta_0$ of $\theta_0$ such that the
following conditions hold: 
\begin{itemize}
\item[A1)] the density support $S_\theta=\{z\in(0,\infty)^k:\ f_\theta(z)>0\}$
           does not depend on $\theta\in\Theta_0$; 
\item[A2)] the derivative $(\theta,z) \in \Theta_0 \times (0,\infty)^k \mapsto \partial_\theta \partial_{\{1,\ldots,k\}} V_\theta(z)$
           exists and is continuous;
\item[A3)] for all $\theta\in\Theta_0$, $z\in(0,\infty)^k$ with $\|z\|=1$ and 
           $\tau_i\subset\{1,\ldots,k\}$,
\[
\|\partial_\theta \log V_\theta(z)\|_\infty \leq c(z)\quad \mbox{and}\quad \|\partial_\theta \log \partial_{\tau_i}V_{\theta}(z)\|_\infty \leq c(z)
\]
where $c(z)$ is a dominating function of the form 
\[
c(z)=  A\sum_{i=1}^k z_i^{-\alpha},\quad A>0,\ \alpha \in [0,1/2)
\]
and $\|\cdot\|$ is an arbitrary norm on $\mathbb{R}^k$.
\end{itemize}
Then, the model $\{f_\theta,\,\theta\in\Theta\}$ is differentiable in quadratic 
mean at $\theta_0$ with score function $\dot\ell_{\theta_0}=\partial_\theta \log f_{\theta_0}$.
\end{proposition}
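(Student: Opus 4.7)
My plan is to invoke the standard sufficient criterion for differentiability in quadratic mean: if $\theta \mapsto \sqrt{f_\theta(z)}$ is continuously differentiable in a neighborhood of $\theta_0$ for $f_{\theta_0}$-almost every $z$, and the Fisher information $I_\theta = \int \dot\ell_\theta \dot\ell_\theta^T f_\theta$ is finite and continuous at $\theta_0$, then DQM holds with score $\dot\ell_\theta$ (Lemma~7.6 in \citet{vdV98}). The three tasks are therefore: (a) establish pointwise smoothness of $\sqrt{f_\theta}$ in $\theta$, (b) derive an explicit uniform envelope on $\dot\ell_\theta$, and (c) verify integrability and continuity of $\theta \mapsto I_\theta$.

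For (a), Proposition~\ref{prop1} expresses $f_\theta$ via the explicit density formula \eqref{eq:llh_Z}, a finite sum over partitions $\tau\in\mathscr{P}_k$ of products of $\exp(-V_\theta)$ and $-\partial_{\tau_j}V_\theta$. Assumption A2 provides continuous differentiability in $\theta$ of the highest-order mixed partial; the intermediate partials $\partial_{\tau_j}V_\theta$ inherit this regularity through the integral representation~\eqref{eq:Vtau}. Combined with A1, this makes $\theta \mapsto f_\theta(z)$ continuously differentiable on $\Theta_0$ for every $z$ in the common support $S_\theta$, so $\dot\ell_\theta = \partial_\theta \log f_\theta$ is well defined and $\theta$-continuous.

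For (b), setting $g_\tau = \prod_j(-\partial_{\tau_j}V_\theta)$ and differentiating $\log f_\theta$ yields
\begin{equation*}
\dot\ell_\theta(z) = -\partial_\theta V_\theta(z) + \sum_{\tau \in \mathscr{P}_k} w_\tau(\theta,z) \sum_{j=1}^{|\tau|} \partial_\theta \log\{-\partial_{\tau_j} V_\theta(z)\},
\end{equation*}
with convex weights $w_\tau = g_\tau/\sum_{\tau'} g_{\tau'}$. The key observation is that $\partial_\theta \log V_\theta$ and $\partial_\theta \log\{-\partial_{\tau_j} V_\theta\}$ are \emph{homogeneous of degree $0$} in $z$, because the $z$-homogeneity of $V_\theta$ (order $-1$) and of $\partial_{\tau_j}V_\theta$ (order $-1-|\tau_j|$) enters the logarithm only as an additive term in $\log\|z\|$ that is annihilated by $\partial_\theta$. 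Hence each of these quantities equals its value at $z/\|z\|$, and A3 lifts from the unit sphere to all of $(0,\infty)^k$ as $\|\dot\ell_\theta(z)\|_\infty \le (V_\theta(z)+k)\, c(z/\|z\|)$, uniformly for $\theta \in \Theta_0$, after using $|\partial_\theta V_\theta(z)| = V_\theta(z)\,|\partial_\theta \log V_\theta(z)|$.

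For (c), sub-additivity and the Fr\'echet normalisation give $V_\theta(z) \le \sum_i z_i^{-1}$, while $c(z/\|z\|) \le A\|z\|^\alpha \sum_i z_i^{-\alpha}$. Squaring, $\|\dot\ell_\theta(z)\|^2$ is dominated by a finite sum of monomials in the $z_i$ whose positive exponents never exceed $2\alpha$ and whose negative exponents are bounded. Under $f_\theta$ the marginals are standard Fr\'echet, so $\mathbb{E}_\theta[Z_i^s]<\infty$ iff $s<1$ and $\mathbb{E}_\theta[Z_i^{-s}]=\Gamma(1+s)<\infty$ for every $s\ge 0$; a H\"older argument makes each such monomial integrable precisely because $2\alpha<1$. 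The same envelope is $\theta$-uniform on $\Theta_0$, so dominated convergence together with the $\theta$-continuity of $\dot\ell_\theta$ delivers continuity of $I_\theta$, and Lemma~7.6 of \citet{vdV98} concludes. I expect the main obstacle to lie exactly in this integration step: the cross-term $V_\theta(z)\,c(z/\|z\|)$ produces positive powers of the $z_i$ that must be controlled against the heavy Fr\'echet upper tails, and the sharp threshold $\alpha<1/2$ imposed in A3 is forced by this constraint.
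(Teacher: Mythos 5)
Your proposal follows essentially the same route as the paper's proof: reduction to Lemma~7.6 of van der Vaart, degree-zero homogeneity of $\partial_\theta\log V_\theta$ and $\partial_\theta\log\partial_{\tau_j}V_\theta$ to lift A3 off the unit sphere, the envelope $(V_\theta(z)+k)\,c(z/\|z\|)$ combined with $V_\theta(z)\le\sum_i z_i^{-1}$, and a H\"older argument against Fr\'echet/exponential marginal moments in which $\alpha<1/2$ is exactly the binding constraint. The only place the paper is more careful is the continuity of $\theta\mapsto I_\theta$, where the reference measure $f_\theta(z)\,\mathrm{d}z$ itself moves with $\theta$, so plain dominated convergence does not apply directly; the paper splits the difference, handles the density part via Scheff\'e's lemma, and upgrades the square-integrable envelope to a uniform $(2+\varepsilon)$-moment bound to get uniform integrability.
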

For all the standard examples discussed in Section~\ref{sec:exam}, the density 
$f_\theta$ is positive on $(0,\infty)^k$ so that assumption A1) is not 
restrictive. Assumption A2) is quite natural in view of 
Equation~\eqref{eq:llh_Z}. Assumption A3) is a technical assumption that ensures 
various integrability properties in the proof of the differentiability in 
quadratic mean. The form of the dominating function $c(z)$ is general enough to
cover all popular models discussed in Section~\ref{sec:exam}.

\begin{proposition}\label{prop:DQM2}
Let $\{f_\theta,\,\theta \in \Theta\}$ be a parametric family of densities of 
multivariate simple max-stable distributions. Denote by $h_\theta$ the angular 
density associated to $f_\theta$. Let $\theta_0\in \mathrm{int}(\Theta)$ and 
assume that there is a neighborhood $\Theta_0$ of $\theta_0$ such that the 
following conditions hold for all nonempty subsets $I \subset \{1,\ldots,k\}$:
\begin{itemize}
\item[B1)] the density support $S_\theta=\{z\in(0,\infty)^k:\ f_\theta(z)>0\}$
           does not depend on $\theta\in\Theta_0$; 
\item[B2)] the derivative $(\theta,w) \in \Theta_0 \times S^{k-1}_I \mapsto \partial_\theta h_{\theta,I}(w)$ exists and is continuous;
\item[B3)] there are  $B^-,B^+\geq 0$ and $0<\beta_i^+<\beta_i^-<(1+\varepsilon)\beta_i^+$ with 
$0<2\varepsilon<\{\sum_{i=1}^k\beta_i^-\}^{-1}$ such that
\[
h_{\theta,I}(w) \geq B^- \prod_{i=1}^k w_i^{-1+\beta_i^-} 
\quad \mbox{and}\quad  
\|\partial_\theta h_{\theta,I}(w)\|_\infty \leq B^+ \prod_{i=1}^k w_i^{-1+\beta_i^+}
\]
for all $\theta\in\Theta_0$ and $w\in S^{k-1}_I$.
\end{itemize}
Then, the model $\{f_\theta,\,\theta\in\Theta\}$ is differentiable in quadratic 
mean at $\theta_0$ with score function 
$\dot\ell_\theta=\partial_\theta \log f_{\theta_0}$.
\end{proposition}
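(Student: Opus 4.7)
My plan is to derive conditions A1--A3 of Proposition~\ref{prop:DQM1} from B1--B3 and then invoke that proposition. Condition A1 is literally B1, while A2 is an immediate consequence of B2: specialising~\eqref{eq:Vtau} to $\tau_j=\{1,\ldots,k\}$ on the open set $(0,\infty)^k = E^k_{\{1,\ldots,k\}}$ (where only the top face contributes) and combining with the homogeneity relation~\eqref{eq:lambda} yields
\[
\partial_{\{1,\ldots,k\}}V_\theta(z) \;=\; -\lambda_{\theta,\{1,\ldots,k\}}(z) \;=\; -k\,\|z\|_1^{-k-1}\,h_{\theta,\{1,\ldots,k\}}(z/\|z\|_1),
\]
so B2 directly provides the joint continuity of $\partial_\theta\partial_{\{1,\ldots,k\}}V_\theta$ in $(\theta,z)$ required by A2.

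The real work is in deducing A3 from B3. I would first rewrite $V_\theta$ and its partial derivatives in polar form. The change of variables $u = rw$, $r = \|u\|_1$, $w \in S^{k-1}_I$, applied to~\eqref{eq:V2} together with~\eqref{eq:lambda}, gives
\[
V_\theta(z) \;=\; k\sum_{\emptyset \ne I \subset \{1,\ldots,k\}} \int_{S^{k-1}_I} \max_{i \in I}\!\bigl(w_i/z_i\bigr)\,h_{\theta,I}(w)\,\sigma_I(\mathrm{d}w),
\]
and an analogous polar expression for each $\partial_{\tau_j} V_\theta(z)$ follows by plugging~\eqref{eq:lambda} into~\eqref{eq:Vtau}. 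For the first inequality of A3, I would keep only the top face and use $\max_i(w_i/z_i) \ge w_l/z_l$ for an arbitrary fixed $l$ together with the lower bound on $h_{\theta,I}$ to obtain $V_\theta(z) \ge C^-\max_l z_l^{-1}$; in parallel, $\max_i(w_i/z_i) \le \sum_i w_i/z_i$ and the upper bound on $|\partial_\theta h_{\theta,I}|$ give $|\partial_\theta V_\theta(z)| \le C^+ \sum_i z_i^{-1}$. Their ratio is then uniformly bounded, so the first estimate of A3 holds with $\alpha = 0$.

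For the second inequality, the same strategy applied face by face to~\eqref{eq:Vtau} produces a lower bound on $|\partial_{\tau_j} V_\theta(z)|$ and an upper bound on $|\partial_\theta \partial_{\tau_j} V_\theta(z)|$, both of product type in the coordinates $z_i$. Their ratio contains a factor of the form $\prod_i z_i^{-\gamma_i}$ whose exponents $\gamma_i$ are controlled, via the gap between $\beta_i^-$ and $\beta_i^+$, by a constant multiple of $\varepsilon \beta_i^-$; the hypothesis $2\varepsilon \sum_i \beta_i^- < 1$ is precisely what is needed to force the total exponent strictly below $1/2$, giving a dominating function of the form $c(z) = A \sum_i z_i^{-\alpha}$ with $\alpha \in [0,1/2)$, as required by A3. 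Once A3 is verified, Proposition~\ref{prop:DQM1} delivers differentiability in quadratic mean at $\theta_0$ with score $\dot\ell_{\theta_0} = \partial_\theta \log f_{\theta_0}$. The delicate part will be this last calculation: I need to propagate the pointwise bounds of B3 through the iterated Lebesgue integrals in~\eqref{eq:Vtau} face by face, keep careful track of which powers of $z_i$ appear in the numerator and denominator, and verify that the numerical constraint on $\varepsilon$ produces exactly the $\alpha < 1/2$ threshold demanded by A3.
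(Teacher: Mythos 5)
Your overall architecture is exactly the paper's: show B1)--B3) imply A1)--A3) and then invoke Proposition~\ref{prop:DQM1}. The treatment of A1) (identical to B1)), of A2) (for $\tau_j=\{1,\ldots,k\}$ the partial derivative $\partial_{\{1,\ldots,k\}}V_\theta$ is just $-\lambda_{\theta,\{1,\ldots,k\}}$, so B2) gives joint continuity directly), and of the first inequality in A3) (lower bound $V_\theta(z)\geq C^-\max_l z_l^{-1}$ from the top face versus upper bound $\|\partial_\theta V_\theta(z)\|_\infty\leq C^+\sum_i z_i^{-1}$, hence a constant ratio) are all correct; the last of these is a point the paper's proof does not even spell out.

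The gap is in the one step you defer, the bound on $\|\partial_\theta\log\partial_{\tau_j}V_\theta(z)\|_\infty$, and it is not merely a matter of bookkeeping. Plugging the B3) bounds into \eqref{eq:Vtau} reduces the problem to comparing the two iterated integrals
\[
\int_{(0,z_{\tau_j^c})}\|(z_{\tau_j},u)\|_1^{-1-\beta^{\pm}}\prod_{i\notin\tau_j}u_i^{-1+\beta_i^{\pm}}\,\mathrm{d}u ,
\]
with exponents $\beta_i^+$ in the numerator and $\beta_i^-$ in the denominator. These integrals are \emph{not} of product type in the $z_i$: the natural lower bound for the denominator (the paper's Lemma~\ref{lem1}) is $C^-\|z_{\tau_j}\|_\infty^{-1-\sum_{i\in\tau_j}\beta_i^-}\prod_{i\in\tau_j^c}\bigl(\tfrac{z_i}{\|z_{\tau_j}\|_\infty}\wedge 1\bigr)^{\beta_i^-}$, while the natural upper bound for the numerator, $C^+\|z_{\tau_j}\|_\infty^{-1-\sum_{i\in\tau_j}\beta_i^+}$, carries no matching factor for the coordinates in $\tau_j^c$. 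Taking the ratio of these two bounds therefore produces $\prod_{i\in\tau_j^c}\bigl(\tfrac{z_i}{\|z_{\tau_j}\|_\infty}\wedge 1\bigr)^{-\beta_i^-}$ with the \emph{full} exponent $\beta_i^-$, not a multiple of $\varepsilon\beta_i^-$; since $\sum_i\beta_i^-$ can be arbitrarily large, this destroys the requirement $\alpha<1/2$. The gap between $\beta_i^-$ and $\beta_i^+$ only controls the coordinates in $\tau_j$ (through the prefactor $\prod_{i\in\tau_j}z_i^{\beta_i^+-\beta_i^-}$), not those in $\tau_j^c$. The paper resolves this with a H\"older interpolation: writing $u_i^{-1+\beta_i^+}$ as a product of $u_i^{(-1+\beta_i^-)/q}$ and a remainder with $q=1+\varepsilon$, so that a $1/q$-power of the denominator integral reappears inside the bound for the numerator and cancels, leaving only the power $(1-1/q)=\varepsilon/(1+\varepsilon)$ of the problematic factor and hence $\alpha=(1-1/q)\sum_i\beta_i^-<1/2$. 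To make your route work without that trick you would instead need a genuinely two-sided sharp estimate for the iterated integral, i.e.\ an \emph{upper} bound that also carries the factors $\prod_{i\in\tau_j^c}\bigl(\tfrac{z_i}{\|z_{\tau_j}\|_\infty}\wedge 1\bigr)^{\beta_i^+}$; that is provable but is a nontrivial refinement of Lemma~\ref{lem1} that your proposal neither states nor proves. As written, the plan's central quantitative claim is asserted rather than established, and its most direct implementation fails.
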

Applications of these general criteria to specific models are postponed to Section~\ref{sec:exam}.

\subsection{Asymptotic theory for the maximum likeliood estimator} \label{sec:MLE}
The asymptotic normality of a consistent sequence of maximum likelihood 
estimators follows from the differentiability in quadratic mean of the statistical
model, a Lipschitz property for the log-likelihood and the non-singularity of the
Fisher information matrix. More precisely, let us recall for future reference the 
following theorem.

\begin{theorem}[\citet{vdV98}, Theorem 5.39]\label{thm:vdV-mle}
Let $\{f_\theta,\theta\in\Theta\}$ be differentiable in quadratic mean at 
$\theta_0\in \mathrm{int}(\Theta)$ with non-singular Fisher information matrix 
$I_{\theta_0}$. Suppose that the following Lipschitz condition is satisfied: 
for all $\theta_1,\theta_2\in\Theta$ 
\begin{equation}\label{eq:lipschitz}
\left|\log f_{\theta_2}(z)-\log f_{\theta_1}(z)\right|\leq \tilde\ell(z)\,\|\theta_2-\theta_1\|_\infty,
\end{equation}
for some measurable function $\tilde\ell$ such that 
$\int \tilde\ell(z)^2f_{\theta_0}(z)\,\mathrm{d}z<\infty$. Then, any consistent
sequence of maximum likelihood estimators $\hat\theta_n^{\mathrm{mle}}$ is
asymptotically normal and efficient as~$n\to\infty$, that is, it satisfies 
Equation~\eqref{eq:asymptotic-normality}.
\end{theorem}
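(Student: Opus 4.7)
The statement is Theorem 5.39 of \citet{vdV98}, and the natural approach is to follow the standard LAN template that van der Vaart develops in Chapters 5 and 7; no genuinely new ingredient is required beyond combining differentiability in quadratic mean (DQM), the Lipschitz envelope, and consistency. My plan would be organised around four steps.

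First, I would use DQM at $\theta_0$ to obtain the local asymptotic normality expansion: for any bounded sequence $h_n \to h$,
\[
\log \prod_{i=1}^n \frac{f_{\theta_0 + h_n/\sqrt n}}{f_{\theta_0}}(Z^{(i)}) = \frac{1}{\sqrt n} \sum_{i=1}^n h^T \dot\ell_{\theta_0}(Z^{(i)}) - \tfrac{1}{2} h^T I_{\theta_0} h + o_P(1).
\]
This is the content of Theorem 7.2 in \citet{vdV98} and follows from DQM together with Le Cam's third lemma. At the same time, DQM forces $\dot\ell_{\theta_0}$ to be centered under $f_{\theta_0}$ with covariance $I_{\theta_0}$, so the CLT applied to the score terms yields $n^{-1/2}\sum \dot\ell_{\theta_0}(Z^{(i)}) \stackrel{d}{\to} \Delta \sim \mathcal N(0, I_{\theta_0})$.

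Second, I would upgrade the pointwise LAN expansion to the uniform statement needed to handle the random perturbation $h_n = \sqrt n (\hat\theta_n^{\mathrm{mle}} - \theta_0)$. This is where the Lipschitz hypothesis \eqref{eq:lipschitz} is essential: writing the normalised log-likelihood ratio as an empirical process indexed by $\theta$ in a shrinking neighbourhood of $\theta_0$, the envelope $\tilde\ell \in L^2(f_{\theta_0})$ makes the class $\{\log f_\theta - \log f_{\theta_0}\}$ Donsker (or at least allows a maximal inequality), so the $o_P(1)$ remainder in the LAN expansion is uniform over $\|h\|\le M$ for each fixed $M$.

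Third, by consistency $\hat\theta_n^{\mathrm{mle}} \stackrel{P}{\to} \theta_0$ together with the uniform expansion, the centred, rescaled MLE $\hat h_n := \sqrt n (\hat\theta_n^{\mathrm{mle}} - \theta_0)$ asymptotically maximises the random quadratic map $h \mapsto h^T \Delta_n - \tfrac12 h^T I_{\theta_0} h$, where $\Delta_n = n^{-1/2}\sum \dot\ell_{\theta_0}(Z^{(i)})$. Non-singularity of $I_{\theta_0}$ makes this quadratic strictly concave, so its unique argmax is $I_{\theta_0}^{-1}\Delta_n$. Applying an argmax/argmin continuous mapping argument (Corollary 5.53 or Theorem 5.52 in \citet{vdV98}) gives
\[
\hat h_n = I_{\theta_0}^{-1}\Delta_n + o_P(1).
\]

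Fourth, the CLT from the first step combined with this representation delivers $\hat h_n \stackrel{d}{\to} I_{\theta_0}^{-1}\Delta \sim \mathcal N(0, I_{\theta_0}^{-1})$, which is \eqref{eq:asymptotic-normality}, and efficiency is automatic because the asymptotic variance equals the inverse Fisher information (the convolution/H\'ajek--Le Cam bound). The main obstacle in this plan is the uniformisation step: pointwise DQM is not enough to control the log-likelihood at a data-dependent parameter $\hat\theta_n^{\mathrm{mle}}$, and the square-integrable Lipschitz envelope is precisely what converts the local quadratic approximation into one valid uniformly over $\sqrt n$-neighbourhoods. Once that empirical-process estimate is in hand, everything else is Taylor expansion and the CLT.
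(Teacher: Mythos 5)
This statement is quoted verbatim from \citet{vdV98} and the paper supplies no proof of its own, so there is nothing internal to compare against; your task was effectively to reconstruct the textbook argument, and your sketch does so correctly. The substance matches: differentiability in quadratic mean supplies the quadratic expansion (the drift $-\tfrac12 h^T I_{\theta_0}h$ and the score CLT), while the square-integrable Lipschitz envelope $\tilde\ell$ supplies the maximal inequality that makes the remainder uniform over $\sqrt n$-neighbourhoods --- you correctly identify this uniformisation as the only genuinely delicate step. Your packaging differs slightly from van der Vaart's: he proves Theorem 5.39 as a corollary of the M-estimator master theorem (Theorem 5.23), verifying that $\theta\mapsto P_{\theta_0}\log(f_\theta/f_{\theta_0})$ is twice differentiable at $\theta_0$ with second derivative $-I_{\theta_0}$ and invoking a maximal inequality for parameter-Lipschitz classes, whereas you go through the LAN expansion of Chapter 7 plus the argmax continuous-mapping theorem; these are equivalent routes resting on the same two hypotheses. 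Two small corrections: the LAN expansion of Theorem 7.2 follows from DQM alone (via Lemma 7.6), not from Le Cam's third lemma, which plays no role here; and before applying the argmax theorem you must establish uniform tightness of $\sqrt n(\hat\theta_n^{\mathrm{mle}}-\theta_0)$ --- you implicitly cover this by citing Theorem 5.52/Corollary 5.53, but it deserves to be flagged as a separate step rather than folded into ``consistency plus the uniform expansion.''
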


In order to apply this theorem in the framework of max-stable distributions, we
mainly need to check the differentiability in quadratic mean of the statistical 
model and the Lipschitz condition. Differentiability in quadratic mean is considered
in the previous subsection and proved under conditions A1)--A3) or B1)--B3) in
Propositions~\ref{prop:DQM1} and~\ref{prop:DQM2}, respectively. Interestingly, 
these conditions imply also the Lipschitz condition \eqref{eq:lipschitz} and we 
obtain the following result.

\begin{theorem} \label{prop:mle}
 Let $\{f_\theta, \theta \in \Theta\}$ be a parametric family of identifiable
 densities of multivariate simple max-stable distributions. Let 
 $\theta_0 \in \mathrm{int}(\Theta)$ and assume that conditions A1)--A3) or 
 B1)--B3) hold and  that the Fisher information matrix $I_{\theta_0}$ is 
 non-singular. Then there exists a sequence $\hat \theta_n^{\mathrm{mle}}$ of 
 local maxima of the log-likelihood that is asymptotically normal and efficient,
 that is, it satisfies Equation~\eqref{eq:asymptotic-normality}, as~$n\to\infty$. 
\end{theorem}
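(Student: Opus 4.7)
The plan is to invoke Theorem \ref{thm:vdV-mle} directly. Differentiability in quadratic mean at $\theta_0$ with score $\dot\ell_{\theta_0}=\partial_\theta \log f_{\theta_0}$ is already supplied by Proposition \ref{prop:DQM1} under A1)--A3) and by Proposition \ref{prop:DQM2} under B1)--B3), and the non-singularity of $I_{\theta_0}$ is an explicit hypothesis. Hence the bulk of the work is to establish the Lipschitz inequality \eqref{eq:lipschitz} with a function $\tilde\ell$ satisfying $\int \tilde\ell(z)^2 f_{\theta_0}(z)\,\mathrm{d}z<\infty$, and then to exhibit a consistent sequence of local maximizers of the log-likelihood.

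For the Lipschitz bound, I would first restrict to a convex neighborhood $\Theta_0$ of $\theta_0$ on which A1)--A3) (or B1)--B3)) hold; since the statement asserts only the existence of a \emph{local} MLE sequence, this local version suffices once combined with the consistency step below. By the mean-value theorem,
\[
|\log f_{\theta_2}(z)-\log f_{\theta_1}(z)|\leq \sup_{\theta\in\Theta_0}\|\partial_\theta \log f_\theta(z)\|_\infty\,\|\theta_2-\theta_1\|_\infty.
\]
Starting from \eqref{eq:llh_Z} and using the product and chain rules,
\[
\partial_\theta \log f_\theta(z) = -\partial_\theta V_\theta(z) + \frac{\sum_{\tau\in\mathscr{P}_k}\sum_{j=1}^{|\tau|}\{\partial_\theta\partial_{\tau_j}V_\theta(z)\}\prod_{\ell\neq j}\{-\partial_{\tau_\ell}V_\theta(z)\}}{\sum_{\tau\in\mathscr{P}_k}\prod_{j=1}^{|\tau|}\{-\partial_{\tau_j}V_\theta(z)\}},
\]
and then factor out $V_\theta$ and $\partial_{\tau_j}V_\theta$ to introduce the logarithmic derivatives controlled by A3). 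Together with the homogeneity identities $V_\theta(z)=\|z\|_1^{-1}V_\theta(z/\|z\|_1)$ and $\partial_{\tau_j}V_\theta(z)=\|z\|_1^{-|\tau_j|-1}\partial_{\tau_j}V_\theta(z/\|z\|_1)$, this bounds the supremum by a function of the form
\[
\tilde\ell(z)=C\sum_{i=1}^k z_i^{-\alpha}\bigl(1+\|z\|_1+\|z\|_1^{-1}\bigr)^{p},
\]
for constants $C,p$ depending only on $k$; the analogous bound under B3) follows from \eqref{eq:lambda} by converting the angular-density estimates into estimates on $\lambda_{\theta,I}$ and then on $\partial_{\tau_j}V_\theta$ via \eqref{eq:Vtau}.

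Square-integrability of $\tilde\ell$ under $f_{\theta_0}$ is then direct: the margins of $Z$ are standard Fr\'echet, so $Z_i^{-1}\sim\mathrm{Exp}(1)$ has all negative moments of $Z_i$ finite and positive moments of $Z_i$ of order below $1$; since $\alpha\in[0,1/2)$, $\tilde\ell(z)^2$ reduces after the Cauchy--Schwarz inequality to a sum of products $Z_i^{-2\alpha}Z_j^{\pm p'}$ with each factor in $L^q$ for every $q\geq 1$ (for positive exponents) respectively every $q\geq 1$ (for $2\alpha<1$). For the existence of a consistent local maximizer, I would appeal to the standard Cram\'er-type argument: local asymptotic normality, which follows from the differentiability in quadratic mean, combined with non-singularity of $I_{\theta_0}$, shows that with probability tending to one the log-likelihood has a local maximum within an $O_P(n^{-1/2})$ ball around $\theta_0$; identifiability ensures that a measurable selection yields a consistent sequence. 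Applying Theorem \ref{thm:vdV-mle} to this sequence then gives \eqref{eq:asymptotic-normality} and efficiency.

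The main obstacle is the joint control of the combinatorial sum appearing in \eqref{eq:llh_Z} and the singularities $z_i^{-\alpha}$ in $\tilde\ell(z)$: the denominator in the formula for $\partial_\theta\log f_\theta$ has cancellations that make a naive term-by-term estimate too loose, so the key technical point is to turn the ratio into a convex combination of logarithmic derivatives of $\partial_{\tau_j}V_\theta$, which are controlled by A3), before squaring and integrating against $f_{\theta_0}$. Once this is done, the constraint $\alpha<1/2$ supplied by A3) is exactly what ensures $\int \tilde\ell(z)^2 f_{\theta_0}(z)\,\mathrm{d}z<\infty$ via the Fr\'echet lower tail, and the remainder of the argument is routine.
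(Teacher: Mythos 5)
Your overall architecture matches the paper's: obtain differentiability in quadratic mean from Propositions \ref{prop:DQM1}/\ref{prop:DQM2} (reducing B1)--B3) to A1)--A3)), verify the Lipschitz condition \eqref{eq:lipschitz} on a neighborhood of $\theta_0$, produce a consistent sequence of local maximizers, and apply Theorem \ref{thm:vdV-mle}. However, two of your steps have genuine gaps.

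First, the square-integrability of $\tilde\ell$ is not ``direct,'' and your justification is wrong where it matters. You propose an envelope of the form $C\sum_i z_i^{-\alpha}\,(1+\|z\|_1+\|z\|_1^{-1})^{p}$ with $p$ an unspecified constant, and then assert that the positive-exponent factors $Z_j^{p'}$ lie in $L^q$ for every $q\geq 1$. Since the margins are standard Fr\'echet, $\mathbb{E}[Z_j^{s}]=\infty$ for all $s\geq 1$, and $\|Z\|_1\geq Z_1$, so $\mathbb{E}[\|Z\|_1^{2p}]=\infty$ as soon as $p\geq 1/2$; your envelope is not square-integrable unless you control the positive powers of $\|z\|$ much more tightly. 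The correct bound (the paper's \eqref{eq:scorebound}) is $\|\partial_\theta\log f_\theta(z)\|_\infty\leq (k+\sum_i z_i^{-1})\,c(z/\|z\|)$, in which the only positive power of $\|z\|$ is $\|z\|^{\alpha}$ with $\alpha<1/2$; its square is integrated by a H\"older argument that splits the exponential-moment factors $z_i^{-1}$ (all moments finite) from $\|z\|_\infty^{2\alpha}$, and the hypothesis $\alpha<1/2$ enters precisely so that one can choose a H\"older exponent $q>1$ with $2q\alpha<1$. You need to actually carry out this homogeneity-based reduction rather than posit an envelope with a free power $p$.

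Second, the consistency step cannot be obtained from local asymptotic normality in the way you describe. LAN is an assertion about the likelihood ratio at fixed local alternatives $\theta_0+h/\sqrt n$ and does not by itself yield the existence of a local maximum of the log-likelihood in an $O_P(n^{-1/2})$ ball; this is exactly why Theorem \ref{thm:vdV-mle} takes consistency as a hypothesis rather than a conclusion. The paper closes this gap differently: it fixes a compact ball $B_\delta(\theta_0)\subset\Theta_0$, uses the Lipschitz bound to dominate $|\log f_\theta(z)-\log f_{\theta_0}(z)|$ by the integrable envelope $\delta\tilde\ell(z)$ uniformly over the ball, and invokes a Wald-type consistency theorem (Theorem 17 of \citet{ferguson-96}) to show that the MLE restricted to $B_\delta(\theta_0)$ converges almost surely to $\theta_0$, hence is eventually interior and a local maximum. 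Identifiability is used there to guarantee that $\theta_0$ is the unique maximizer of $\theta\mapsto\mathbb{E}_{\theta_0}\log f_\theta(Z)$, not merely for a measurable selection. You should replace your LAN-based consistency claim with such a uniform-law/Wald argument, for which your Lipschitz envelope is precisely the needed ingredient.
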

\begin{proof}
 From the proof of Proposition \ref{prop:DQM2} it follows that conditions 
 B1)--B3) imply conditions A1)--A3) with $\|\cdot\|=\|\cdot\|_\infty$. Thus, 
 without loss of generality, we may  assume that conditions A1)--A3) hold, and
 by Proposition \ref{prop:DQM1}, that the model is differentiable in quadratic 
 mean at $\theta_0$. 

We next prove that conditions A1)--A3) imply the Lipschitz condition~\eqref{eq:lipschitz}
for all $\theta_1,\theta_2\in\Theta_0$, where $\Theta_0$ is the neighborhood of $\theta_0$
where conditions A1)--A3) hold. Indeed, by Equation A2), the derivative 
$\partial_\theta \log f_\theta(z) = \partial_\theta f_\theta(z) / f_\theta(z)$ exists
for every $\theta \in \Theta_0$ and $z \in (0,\infty)^k$ and, by
 Equation \eqref{eq:scorebound}, satisfies  $\| \partial_\theta \log f_\theta(z)\|_\infty \leq \tilde \ell(z)$
 with $\tilde \ell(z) = \left( k + \sum_{i=1}^k z_i^{-1}\right) c\left(z/\|z\|\right)$.
 Consequently, we obtain
 \begin{align*}
 | \log f_{\theta_2}(z) - \log f_{\theta_1}(z) | ={}& \left| \int_{0}^{1} \left\{\partial_\theta \log f_{\theta_1 + t (\theta_2-\theta_1)}(z)\right\}^T (\theta_2-\theta_1) \, \mathrm{d} t \right|\\
   \leq{}& \int_{0}^{1} k^2 \, \| \partial_\theta \log f_{\theta_1 + t (\theta_2-\theta_1)}(z) \|_\infty \, \|\theta_2-\theta_1\|_\infty \mathrm{d} t  \\
   \leq{}& k^2 \, \tilde \ell(z) \, \|\theta_2 - \theta_1\|_\infty.
 \end{align*}
The finiteness of the integral $\int \tilde \ell(z)^2 f_{\theta}(z) \mathrm{d}z$ 
follows as in the proof of Proposition~\ref{prop:DQM1}. This proves the Lipschitz 
condition~\eqref{eq:lipschitz}.

 Now, choose $\delta >0$ such that the compact set  $B_\delta(\theta_0) = \{\theta\in\Theta:\, \|\theta-\theta_0\| \leq \delta\}$
 is contained in $\Theta_0$. Then, the Lipschitz condition entails 
$|\log f_\theta(x) - \log f_{\theta_0}(z)| \leq \delta \tilde \ell(z)$
 for all $\theta \in B_\delta(\theta_0)$, $z \in (0,\infty)^k$, with 
 $\mathbb{E}_{\theta_0} |\tilde \ell(Z)| < \infty$. By Theorem 17 in
 \citet{ferguson-96}, the sequence $\hat \theta_n^{(\delta)}$
 of maximum likelihood estimators
 $$ \hat \theta_n^{(\delta)} = \mathrm{argmax}_{\theta \in B_\delta(\theta_0)} \left\{\frac 1 n \sum\nolimits_{i=1}^n \log f_\theta\left(Z^{(i)}\right) \right\} $$
 restricted to $B_\delta(\theta_0)$ converges almost surely to $\theta_0$, as
 $n\to\infty$. This implies that $\hat \theta_n^{(\delta)}$ is eventually in 
 the interior of $B_\delta(\theta_0)$ and hence a local maximum of the 
 log-likelihood. Thus, there exists a strongly consistent sequence 
 $\{\hat \theta_n\}_{n \in \mathbb{N}}$ of local maxima of the log-likelihood
 function, which is asymptotically normal and efficient according to 
 Theorem~\ref{thm:vdV-mle}.
\end{proof}

For simplicity, in the sequel, we will always denote by the maximum likelihood estimator 
$\hat \theta_n^{\mathrm{mle}}$ the sequence of local maxima of the log-likelihood 
as defined in Theorem \ref{prop:mle}. For the global maximum, the following remark 
provides technical conditions that ensure asymptotic normality.

\begin{remark}
Proposition \ref{prop:mle} entails the existence of an asymptotically
normal sequence of local maxima of the log-likelihood function. A similar
result for the global maximum of the log-likelihood function, can be shown
under some additional assumptions.
Assume the parameter space $\Theta$ can be partitioned into a compact set $\Theta_0$ and 
a finite number of sets $L_1,\ldots,L_m$ such that
 \begin{itemize}
  \item $\theta_0 \in \mathrm{int}(\theta_0)$;
  \item conditions A1)--A3) or B1)--B3) hold locally in a neighborhood of any point of $\Theta_0$;
  \item $ \displaystyle \mathbb{E}_{\theta_0}\Big(\sup_{\theta \in L_j} \log \left\{f_\theta(Z) / f_{\theta_0}(Z)\right\}\Big) < 0$ for all $j=1,\ldots,m$.
 \end{itemize}
 Then, any sequence $\hat \theta_n^{mle}$ of \emph{global} maxima of the log-likelihood 
 is asymptotically normal and efficient as $n\to\infty$. 
 {Note that the third condition always holds true if $\Theta$ is compact.} 
\end{remark}

\section{Examples} \label{sec:exam}

In this section, we consider some popular parametric families of max-stable distributions,
which all admit densities as a simple consequence of Prop.~\ref{prop1}.
We show that the maximum likelihood estimator for the respective parameters 
in these models is asymptotically normal.

To this end, we show for each model that it satisfies either conditions 
A1)--A3) or conditions B1)--B3). Proposition \ref{prop:DQM1} and Proposition
\ref{prop:DQM2}, respectively, then entail that the model is differentiable 
in quadratic mean and, provided that the Fischer information matrix is non-singular,
there exists an asymptotically normal sequence of local maxima of the log-likelihood
function; see Theorem \ref{prop:mle}. For the models under consideration, in view
of the respective forms of the exponent function and the angular density, it is 
straight-forward to show that conditions A1)--A2) and B1)--B2) are satisfied. Thus, 
the proofs given in the appendix focus on the verification of condition A3) and B3), 
respectively.

\subsection{Logistic model}\label{sec:log}
The family of logistic max-stable distributions is defined by the exponent functions
$$V_\theta(z)=\left(z_1^{-1/\theta}+\cdots+z_k^{-1/\theta}\right)^\theta, \quad z\in(0,\infty)^k,$$
where the parameter $\theta\in (0,1)$ interpolates between complete dependence for $\theta \to 0$
and independence for $\theta \to 1$.

\begin{proposition} \label{prop:cond-logistic}
 The logistic model is differentiable in quadratic mean at any $\theta_0 \in (0,1)$.
 Furthermore, the maximum likelihood estimator $\hat \theta_n^{mle}$ is asymptotically
 normal and efficient as $n\to\infty$.
\end{proposition}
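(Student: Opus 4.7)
The plan is to invoke Theorem~\ref{prop:mle} by verifying conditions A1)--A3) on some neighborhood $\Theta_0 \subset (0,1)$ of $\theta_0$. Conditions A1) and A2) are routine: the logistic density is strictly positive on $(0,\infty)^k$ for every $\theta \in (0,1)$, because the prefactor $\prod_{j=1}^{k-1}(j-\theta)$ appearing in the mixed partial $\partial_{\{1,\ldots,k\}}V_\theta$ never vanishes on $(0,1)$; and $(\theta,z)\mapsto V_\theta(z)$ is jointly $C^\infty$ on $(0,1)\times(0,\infty)^k$, whence so is its mixed partial derivative.

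The real work is in A3). Writing $g_\theta(z)=\sum_{i=1}^k z_i^{-1/\theta}$, a straightforward induction on $|\tau|$ yields the closed form
\[
\partial_\tau V_\theta(z) = c_\tau(\theta)\, g_\theta(z)^{\theta-|\tau|}\prod_{i\in\tau} z_i^{-1-1/\theta},
\]
where $c_\tau(\theta)$ is a smooth, nonvanishing function of $\theta$ on $(0,1)$. Logarithmic differentiation then reduces both $\partial_\theta \log V_\theta(z)$ and $\partial_\theta \log |\partial_\tau V_\theta(z)|$ to an affine combination, with coefficients smooth and bounded in $\theta\in\Theta_0$, of $\log g_\theta(z)$, the weighted average $\sum_i p_i(z)\log z_i$ with $p_i(z) = z_i^{-1/\theta}/g_\theta(z)$, and the individual logarithms $\log z_i$.

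The key elementary estimate is: for any $\alpha\in(0,1/2)$ and any $M<\infty$ there is $C_{\alpha,M}$ such that $|\log u| \leq C_{\alpha,M}(1+u^{-\alpha})$ for $u\in(0,M]$. On the unit sphere the coordinates are bounded above by a fixed constant, so $|\log z_i| \leq C(1+z_i^{-\alpha})$; the convex combination satisfies $|\sum_i p_i(z)\log z_i| \leq \max_i|\log z_i|$; and since $g_\theta(z) \in [z_{\min}^{-1/\theta},\, k\, z_{\min}^{-1/\theta}]$ with $z_{\min}=\min_i z_i$, one gets $|\log g_\theta(z)| \leq C(1+z_{\min}^{-\alpha})$ uniformly in $\theta\in\Theta_0$. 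Assembling these bounds gives the required domination $c(z) = A\sum_i z_i^{-\alpha}$ with $\alpha\in(0,1/2)$, uniformly in $\theta\in\Theta_0$, verifying A3).

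The main obstacle is the bookkeeping needed to carry the estimate through every subset $\tau\subset\{1,\ldots,k\}$ and to maintain uniformity in $\theta$; once the explicit formula for $\partial_\tau V_\theta$ is in hand, everything collapses to the elementary logarithmic bound. Finally, non-singularity of the scalar Fisher information $I_{\theta_0}$ reduces to showing that $\partial_\theta\log f_{\theta_0}$ is not $f_{\theta_0}$-almost surely zero, which is immediate from the nontrivial $\theta$-dependence of $V_\theta$ exhibited above. With A1)--A3) and $I_{\theta_0}>0$, Theorem~\ref{prop:mle} delivers the asserted asymptotic normality and efficiency of $\hat\theta_n^{\mathrm{mle}}$.
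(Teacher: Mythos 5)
Your proposal is correct and follows essentially the same route as the paper's Lemma~\ref{lem:log}: the same closed-form expression $\partial_\tau V_\theta(z)=c_\tau(\theta)\,g_\theta(z)^{\theta-|\tau|}\prod_{i\in\tau}z_i^{-1-1/\theta}$, logarithmic differentiation into the same three types of terms, and the same elementary bound $|\log u|\leq C(1+u^{-\alpha})$ on the unit sphere to produce the dominating function in A3). The only (welcome) addition is your explicit remark on the non-singularity of the Fisher information, which the paper leaves implicit.
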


\begin{proof}
  The logistic model satisfies conditions A1)--A3) by Lemma \ref{lem:log} in the Appendix,
  and by Prop.~\ref{prop:DQM1} this implies differentiability in quadratic mean.
  Furthermore, Theorem~2 yields the asymptotic normality of the maximum likelihood
  estimator.
\end{proof}

\subsection{Dirichlet model}\label{sec:dir}

The max-stable family of $k$-dimensional Dirichlet distributions with parameters
$\alpha_1,\dots, \alpha_k>0$ is characterized by the angular densities
\begin{equation} \label{diri-spec}
  h(w) =  \frac 1 k \frac{\Gamma(1+\sum_{i=1}^k \alpha_i)}{(\sum_{i=1}^k \alpha_i w_i)^{k+1}} \prod_{i=1}^k \frac{\alpha_i}{\Gamma(\alpha_i)}  
  \left(\frac{\alpha_i w_i}{\sum_{j=1}^k \alpha_j w_j}\right)^{\alpha_i-1}, \quad w \in S^{k-1},
\end{equation}
and it has no mass on lower-dimensional faces of $S^{k-1}$ \citep{col1991}.

\begin{proposition} \label{prop:cond-dirichlet}
 The Dirichlet model is differentiable in quadratic mean at any parameter vector 
 $\theta_0=(\alpha_1,\ldots,\alpha_k)\in \Theta=(0,\infty)^k$. Furthermore, the 
 maximum likelihood estimator $\hat \theta_n^{mle}$ is asymptotically normal
 and efficient as $n \to \infty$.
\end{proposition}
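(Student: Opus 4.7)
The plan is to invoke Theorem~\ref{prop:mle} via the angular-density route, that is, to verify conditions B1)--B3) of Proposition~\ref{prop:DQM2} at an arbitrary $\theta_0=(\alpha_1,\ldots,\alpha_k)\in(0,\infty)^k$ and then to argue that the Fisher information is non-singular. Because the Dirichlet angular measure charges only the interior face $S^{k-1}_{\{1,\ldots,k\}}$, the conditions on the faces $I\subsetneq\{1,\ldots,k\}$ are vacuous, so only the top face needs attention. Condition B1) then follows from the strict positivity of $h_\theta$ on $\mathrm{int}(S^{k-1})$ together with Proposition~\ref{prop1}, which guarantees $f_\theta>0$ on all of $(0,\infty)^k$ for every $\theta$ in a neighborhood of $\theta_0$; condition B2) is immediate from the explicit formula \eqref{diri-spec}, every factor being smooth in $(\theta,w)$ on $\Theta_0\times\mathrm{int}(S^{k-1})$.

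The core of the proof is condition B3). After collecting exponents, \eqref{diri-spec} can be rewritten as
$$h_\theta(w)=\frac{\Gamma(1+\sum_i\alpha_i)}{k\,(\sum_j\alpha_j w_j)^{1+\sum_i\alpha_i}}\prod_{i=1}^k\frac{\alpha_i^{\alpha_i}}{\Gamma(\alpha_i)}\,w_i^{\alpha_i-1}.$$
On a compact neighborhood $\Theta_0$ of $\theta_0$, the prefactor and $\sum_j\alpha_j w_j$ are bounded above and below by positive constants (uniformly in $w\in S^{k-1}$), which gives the lower bound in B3) with $\beta_i^-:=\alpha_i$. For the upper bound, logarithmic differentiation yields
$$\partial_{\alpha_j}\log h_\theta(w)=\psi(1+\textstyle\sum_i\alpha_i)-\psi(\alpha_j)+\log\alpha_j+1+\log w_j-\frac{w_j(1+\sum_i\alpha_i)}{\sum_i\alpha_i w_i}-\log(\textstyle\sum_i\alpha_i w_i),$$
so $\partial_\theta h_\theta(w)$ equals $h_\theta(w)$ times a quantity whose only potentially unbounded piece on $\Theta_0\times\mathrm{int}(S^{k-1})$ is $\log w_j$. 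Using $|\log w_j|\leq C_\eta w_j^{-\eta}$ for any $\eta>0$ absorbs this singularity, giving $\|\partial_\theta h_\theta(w)\|_\infty\leq B^+\prod_i w_i^{\alpha_i-1-\eta}$, i.e.\ $\beta_i^+:=\alpha_i-\eta$.

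The main delicate point is verifying the numerical constraints in B3), namely $0<\beta_i^+<\beta_i^-<(1+\varepsilon)\beta_i^+$ together with $2\varepsilon\sum_i\beta_i^-<1$. With the choices above this reduces to $\eta<\varepsilon\alpha_i/(1+\varepsilon)$ for every $i$ and $\varepsilon<1/(2\sum_i\alpha_i)$; first fixing $\varepsilon$ small enough and then $\eta$ even smaller (using strict positivity of all $\alpha_i$) realizes both inequalities, so B3) holds. Proposition~\ref{prop:DQM2} then gives differentiability in quadratic mean at $\theta_0$ with score $\partial_\theta\log f_{\theta_0}$, and Theorem~\ref{prop:mle} yields the asymptotic normality and efficiency of a consistent sequence of local MLEs once the Fisher information matrix is non-singular; the latter follows from the identifiability of the Dirichlet parameterization, since distinct $(\alpha_1,\ldots,\alpha_k)$ produce distinct angular densities through the explicit form \eqref{diri-spec}.
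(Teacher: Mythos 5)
Your verification of B1)--B3) is essentially the paper's own argument (Lemma~\ref{lem:dir}): the lower bound with $\beta_i^-=\alpha_i$ comes from bounding $\sum_j\alpha_jw_j$ away from $0$ and $\infty$, and the upper bound with $\beta_i^+=\alpha_i-\eta$ comes from absorbing the $\log w_j$ singularity of the derivative into a small negative power, followed by the same ``fix $\varepsilon$ first, then $\eta$'' choice to satisfy the numerical constraints; your logarithmic-differentiation bookkeeping is just a cleaner way of organizing the same computation. The one flaw is your last sentence: identifiability (global injectivity of $\theta\mapsto h_\theta$) does \emph{not} imply non-singularity of $I_{\theta_0}$, which is the local, first-order condition that the score components $\partial_{\alpha_1}\log f_{\theta_0},\ldots,\partial_{\alpha_k}\log f_{\theta_0}$ be linearly independent; a model can be identifiable yet have a degenerate Fisher information at a point. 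The paper itself does not verify non-singularity for the Dirichlet family either --- it is carried as a standing hypothesis of Theorem~\ref{prop:mle} --- so you should either do the same or give a genuine argument for linear independence of the scores rather than appeal to identifiability.
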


\begin{proof}
 By Lemma~\ref{lem:dir}, the Dirichlet model satisfies conditions B1)--B3).
 Differentiability in quadratic mean then follows from Prop.~\ref{prop:DQM2},
 and Theorem~\ref{prop:mle} further implies asymptotic normality and efficiency
 of the maximum likelihood estimator.
\end{proof}

\subsection{Extremal-$t$ model and Schlather process}\label{sec:ext}
 
The family of extremal-$t$ distributions is parameterized by $\nu>0$ and a
positive definite correlation matrix $\Sigma$. It can be characterized by its 
angular densities $h_I$ on each face $S_I^{k-1}$, $I\subset \{1,\dots,k\}$. On 
the interior of $S^{k-1}$, it is given by
\begin{align}\label{extr_spec}
  h_{\{1,\dots,k\}}(w) = C(\Sigma,\nu) \cdot \left\{\left(w^{1/\nu}\right)^\top \Sigma^{-1} w^{1/\nu}\right\}^{-\frac{k+\nu} 2} \cdot \prod_{i=1}^k w_i^{\frac{1-\nu}{\nu}}, \quad w \in S_{\{1,\ldots,k\}}^{k-1},
\end{align}
where $$C(\Sigma,\nu) = \pi^{(1-k)/2} \Gamma\left(\frac{\nu+1} 2\right)^{-1} \Gamma\left(\frac{k + \nu} 2\right) \nu^{-k+1} \det(\Sigma)^{-1/2}.$$ 
It can be deduced from \citet{R13} that the angular densities on the 
lower-dimensional faces are of the same form.

\begin{proposition} \label{prop:extremalt}
 The extremal-$t$ model is differentiable in quadratic mean at any
 $\theta_0=(\Sigma,\nu)$ with $\Sigma$ being a positive definite correlation
 matrix and $\nu>0$. Furthermore, the maximum likelihood estimator
 $\hat \theta_n^{mle}$ is asymptotically normal and efficient as $n\to\infty$. 
\end{proposition}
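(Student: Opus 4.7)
The plan is to follow the pattern of Propositions \ref{prop:cond-logistic} and \ref{prop:cond-dirichlet}: verify conditions B1)--B3) of Proposition \ref{prop:DQM2} for the extremal-$t$ angular densities, conclude differentiability in quadratic mean, and then invoke Theorem \ref{prop:mle} for asymptotic normality and efficiency. Here I parametrize $\theta = (\Sigma, \nu)$ with $\Sigma$ ranging in the open cone of positive definite correlation matrices and $\nu > 0$.

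Conditions B1) and B2) should be immediate from the explicit formula \eqref{extr_spec} together with its analogues on the lower-dimensional faces of $S^{k-1}$ (see \citet{R13}). The angular densities $h_{\theta, I}$ are strictly positive on the relative interior of each face $S^{k-1}_I$, so by Proposition \ref{prop1} the max-stable density $f_\theta$ is strictly positive on $(0,\infty)^k$, independently of $\theta$; and the dependence on $\theta$ enters real-analytically through $\det \Sigma_I$, $\Sigma_I^{-1}$, $\nu$, and the power $w_i^{1/\nu}$, so continuous differentiability in $\theta$ on a small neighborhood of $\theta_0$ is routine.

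The bulk of the work lies in condition B3). Reading off \eqref{extr_spec}, the boundary behaviour of $h_{\theta, I}(w)$ as $w_i \to 0$ is governed by $\prod_{i \in I} w_i^{-1 + 1/\nu}$, since the quadratic form $(w^{1/\nu})^T \Sigma_I^{-1} w^{1/\nu}$ stays between two positive constants on $S^{k-1}_I$ by positive definiteness of $\Sigma_I^{-1}$ and the fact that $\|w^{1/\nu}\|$ is bounded above and (after accounting for the largest component of $w$) below on the simplex. Guided by this, I would fix a small closed neighborhood $\Theta_0$ of $\theta_0$ on which $\nu \in [\nu_0 - \delta, \nu_0 + \delta]$, and take $\beta_i^- = 1/(\nu_0 - \delta)$ together with $\beta_i^+$ slightly less than $1/(\nu_0 + \delta)$.

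The main obstacle will be the upper bound on $\partial_\theta h_{\theta, I}$. Derivatives in the entries of $\Sigma$ preserve the $w$-behaviour, since $\Sigma^{-1}$ and $\det\Sigma$ depend smoothly on $\Sigma$ near $\Sigma_0$ with bounds independent of $w$. Differentiation in $\nu$, however, brings down factors of $\log w_i$ through $w_i^{1/\nu} = \exp(\log w_i / \nu)$, through the exponent $(1-\nu)/\nu$, and through $\log\{(w^{1/\nu})^T \Sigma^{-1} w^{1/\nu}\}$. I would absorb these logarithms using $|\log w_i| \leq C_\epsilon w_i^{-\epsilon}$, valid for any $\epsilon > 0$, to obtain an upper bound of the form $C \prod_{i \in I} w_i^{-1 + 1/(\nu_0 + \delta) - \epsilon}$. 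It then remains to check that $\delta$ and $\epsilon$ can be chosen small enough relative to $\nu_0$ and $k$ so that the compatibility relations $\beta_i^+ < \beta_i^- < (1+\varepsilon) \beta_i^+$ and $2\varepsilon \sum_i \beta_i^- < 1$ hold simultaneously; this reduces to a short algebraic verification in $\delta$, $\epsilon$, $\varepsilon$. Once B3) is established (presumably via a Lemma in the appendix analogous to those for the logistic and Dirichlet cases), Proposition \ref{prop:DQM2} yields differentiability in quadratic mean and Theorem \ref{prop:mle} delivers the stated asymptotic normality and efficiency of $\hat\theta_n^{\mathrm{mle}}$.
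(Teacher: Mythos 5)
Your proposal is correct and follows essentially the same route as the paper: the paper's Lemma~\ref{lem:ext} verifies B1)--B3) exactly as you outline, bounding the quadratic form $(w^{1/\nu})^\top\Sigma^{-1}w^{1/\nu}$ between positive constants via the extreme eigenvalues of $\Sigma^{-1}$, noting that $\Sigma$-derivatives preserve the $w$-behaviour, and absorbing the logarithms produced by the $\nu$-derivative through $|\log w_i|\leq C_\epsilon w_i^{-\epsilon}$. Your explicit choice $\beta_i^-=1/(\nu_0-\delta)$, $\beta_i^+$ slightly below $1/(\nu_0+\delta)$ is in fact a slightly more careful treatment of the uniformity of the exponents over $\Theta_0$ than the paper's terse ``follows from continuity of $C_1,\dots,C_4$,'' but it is the same argument.
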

\begin{proof}
  The extremal-$t$ model satisfies conditions B1)--B3) by Lemma~\ref{lem:ext} 
  in the Appendix. By Prop.~\ref{prop:DQM2} this implies differentiability in
  quadratic mean and, furthermore, asymptotic normality and efficiency of the 
  maximum likelihood estimator follow from Theorem~\ref{prop:mle}.
\end{proof}

In spatial extremes, the class of extremal-$t$ distributions appears as the 
finite dimensional distributions of the extremal-$t$ process 
$\{Z(t), \ t \in \mathbb{R}^d\}$, which is max-stable and stationary 
\citep{opi2013}. It is parameterized by a correlation function 
$\rho: \mathbb{R}^d \to [-1,1]$ and a single value $\nu>0$ via the relation
 $$ \Sigma = \{\rho(t_i-t_j)\}_{1 \leq i,j \leq k} $$
where $\nu$ and $\Sigma$ are the parameters of the spectral density of the 
distribution of the random vector $(Z(t_1),\ldots,Z(t_n))$ as in
\eqref{extr_spec}. The special case $\nu=1$ corresponds to the extremal
Gaussian process \citep{sch2002}, also called Schlather process.

\begin{corollary}\label{cor:ext}
  Let $Z$ be a Schlather process on $\mathbb R^d$ with correlation function $\rho$
  coming from the parametric family
  $$\rho(h) = \exp(-\|h\|_2^\alpha/s), \quad (s,\alpha) \in \Theta = (0,\infty)\times (0,2].$$
  Suppose that $Z$ is observed at pairwise distinct locations $t_1,\dots, t_k \in \mathbb R^d$ 
  such that not all pairs of locations have the same Euclidean distance. 
  Then, the maximum likelihood estimator of $\theta=(s,\alpha)$ is asymptotically normal.
\end{corollary}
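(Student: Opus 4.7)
The Schlather process is the special case $\nu=1$ of the extremal-$t$ process, so the joint density $f_{(s,\alpha)}$ of $(Z(t_1),\dots,Z(t_k))$ is an instance of the extremal-$t$ density with correlation matrix
\[
\Sigma_{s,\alpha} = \bigl(\exp(-\|t_i-t_j\|_2^\alpha/s)\bigr)_{1\leq i,j\leq k}.
\]
The powered exponential kernel $\rho(h)=\exp(-\|h\|_2^\alpha/s)$ is a valid correlation function on $\mathbb{R}^d$ for every $(s,\alpha)\in\Theta$: up to a scale factor it is the characteristic function of an isotropic symmetric $\alpha$-stable vector, so positive definiteness follows from Bochner's theorem and $\Sigma_{s,\alpha}$ is a genuine positive definite correlation matrix.

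My strategy is to transport conditions B1)--B3) from Lemma~\ref{lem:ext} (which establishes them for the full extremal-$t$ parametrization $(\Sigma,\nu)$) to the two-dimensional parametrization $(s,\alpha)$, and then apply Theorem~\ref{prop:mle}. Condition B1) is immediate because the support $(0,\infty)^k$ does not depend on $(s,\alpha)$. The map $(s,\alpha)\mapsto\Sigma_{s,\alpha}$ is $C^\infty$ on $\Theta$, so by the chain rule $\partial_{(s,\alpha)}h_{\theta,I}(w)=\partial_\Sigma h_{\Sigma,1,I}(w)\,\partial_{(s,\alpha)}\Sigma_{s,\alpha}$, which gives B2). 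On any compact neighborhood $\Theta_0$ of $(s_0,\alpha_0)$ the Jacobian $\partial_{(s,\alpha)}\Sigma_{s,\alpha}$ is uniformly bounded, so the polynomial lower bound for $h_{\theta,I}$ and the polynomial upper bound for $\partial_\Sigma h_{\Sigma,1,I}$ supplied by Lemma~\ref{lem:ext} yield bounds of exactly the same form for the $(s,\alpha)$-parametrization, with the same exponents $\beta_i^\pm$. Proposition~\ref{prop:DQM2} then delivers differentiability in quadratic mean at $(s_0,\alpha_0)$.

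The decisive step, and the only one where the hypothesis on the distances is used, is non-singularity of the Fisher information matrix $I_{(s_0,\alpha_0)}$. For any $v\in\mathbb{R}^2$, the chain rule yields at $(s_0,\alpha_0)$
\[
v^T I_{(s_0,\alpha_0)}v = \mathbb{E}_{(s_0,\alpha_0)}\Bigl[\bigl(\mathrm{tr}\bigl[(\partial_{(s,\alpha)}\Sigma_{s,\alpha}\,v)^T\partial_\Sigma\log f_{\Sigma,1}(Z)\bigr]\bigr)^2\Bigr].
\]
Choosing two distinct distances $r_1\neq r_2$ from $\{\|t_i-t_j\|_2:i\neq j\}$ (which exist by assumption), the $2\times 2$ matrix with rows $\partial_{(s,\alpha)}\exp(-r_\ell^\alpha/s)$, $\ell=1,2$, has determinant proportional to $r_1^\alpha r_2^\alpha\log(r_2/r_1)\neq 0$, so the Jacobian $\partial_{(s,\alpha)}\Sigma_{s,\alpha}$ has rank two on $\Theta$. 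Hence $\partial_{(s,\alpha)}\Sigma_{s,\alpha}\,v$ is a non-zero symmetric direction for every $v\neq 0$, and combined with identifiability of the extremal-$t$ family in $\Sigma$ together with the regularity just established, this makes the quadratic form strictly positive. With B1)--B3) and the non-singularity of $I_{(s_0,\alpha_0)}$ in hand, Theorem~\ref{prop:mle} yields the claimed asymptotic normality of $\hat\theta_n^{\mathrm{mle}}$. The main obstacle I anticipate is the identifiability-to-Fisher-information step: showing that no non-trivial tangent direction in $\Sigma$-space produces an $f_{\theta_0}$-almost-sure zero score, which I would handle by a direct computation using the explicit form of $\partial_\Sigma\log f_{\Sigma,1}$ derived from \eqref{extr_spec}.
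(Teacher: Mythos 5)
Your proposal follows essentially the same route as the paper's proof: it transfers conditions B1)--B3) from Lemma~\ref{lem:ext} through the smooth reparametrization $(s,\alpha)\mapsto\Sigma_{s,\alpha}$, uses the existence of two distinct pairwise distances to show this map has an invertible (full-rank) Jacobian, and concludes via Proposition~\ref{prop:DQM2} and Theorem~\ref{prop:mle}. Your additional discussion of the Fisher information is a sensible supplement rather than a divergence --- the paper's own proof only checks that $\Psi$ and $\Psi^{-1}$ are continuously differentiable and, like Theorem~\ref{prop:mle}, treats non-singularity of $I_{\theta_0}$ as a standing assumption, so the identifiability-to-Fisher-information step you honestly flag as incomplete is a gap the paper itself does not close either.
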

\begin{proof}
 Suppose that $\|t_1-t_2\|_2 \neq \|t_2-t_3\|_2$ and observe that the mapping
 $\Psi:\Theta \to \Psi(\Theta)$, $\theta = (s,\alpha) \mapsto \{\rho_{ij}\}_{1 \leq i, j \leq k}  = \{\exp(- \|t_i -t_j\|_2^\alpha/s)\}_{1 \leq i, j \leq k}$
 is continuously  differentiable. Since
 $$ \alpha = \frac{\log\{\log\rho_{12}\} - \log\{\log\rho_{23}\}}{\log \|t_1-t_2\|_2 - \log\|t_2-t_3\|_2},\qquad s= - \frac{\|t_1-t_2\|_2^\alpha}{\log\rho_{12}},$$
 the same holds true for the inverse mapping $\Psi^{-1}$. Thus, from 
 Lemma~\ref{lem:ext} in the Appendix it follows that the Schlather process
 satisfies conditions B1)--B3) as well.
 Hence, Prop.~\ref{prop:DQM2} and Thm.~\ref{prop:mle} imply differentiability
 in quadratic mean of the model and asymptotic normality and efficiency of 
 the maximum likelihood estimator.
\end{proof}

\subsection{H\"usler--Reiss model and Brown--Resnick process} \label{sec:hr}

The H\"usler--Reiss distribution is parameterized by a strictly 
conditionally negative definite matrix $\Lambda = \{\lambda^2_{i,j}\}_{1\leq i,j\leq k}$,
and it can be characterized by its exponent function 
\begin{align}\label{exp_HR}
  V_\Lambda(z_1,\dots, z_k) = \sum_{i=1}^k  z_i^{-1} \Phi_{k-1}\left( 2\lambda^2_{i,-i}  + \log(z_{-i}/z_i); R^{(i)}\right), \quad z\in(0,\infty)^k,
\end{align}
\citep[cf.,][]{hue1989, nik2009}, where for $i=1,\dots, k$, the strictly positive
definite matrix $R^{(i)}$ has $(j,m)$th entry
$2(\lambda^2_{i,j} + \lambda^2_{i,m} - \lambda^2_{j,m})$, $j,m\neq i$ \citep[Lemma 3.2.1]{ber1984}.
Here and in the sequel, for $p\in\mathbb N$, $\Phi_{p}(\cdot, R)$ and 
$\varphi_{p}(\cdot, R)$ denote the $p$-dimensional normal distribution function 
and density with covariance matrix $R$, respectively.

\begin{proposition}\label{prop:cond-hr}
 The H\"usler--Reiss model is differentiable in quadratic mean at any 
 $\theta_0 = \Lambda$ with $\Lambda$ being a strictly conditionally negative 
 definite matrix. Furthermore, the maximum likelihood estimator 
 $\hat \theta_n^{mle}$ is asymptotically normal and efficient as $n\to\infty$.
\end{proposition}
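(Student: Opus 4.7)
The plan is to follow the template of Propositions~\ref{prop:cond-logistic}--\ref{prop:extremalt}: verify conditions A1)--A3) of Proposition~\ref{prop:DQM1} for the H\"usler--Reiss family to deduce differentiability in quadratic mean, and then invoke Theorem~\ref{prop:mle} to obtain asymptotic normality and efficiency of $\hat\theta_n^{\mathrm{mle}}$. Because the H\"usler--Reiss model is naturally parametrized by the matrix $\Lambda$ appearing directly in the exponent function~\eqref{exp_HR}, working with A1)--A3) is more convenient than with B1)--B3) (the angular density is a complicated log-normal-type expression on the simplex), and the technical work would be isolated in an appendix lemma parallel to Lemmas~\ref{lem:log}, \ref{lem:dir}, \ref{lem:ext}.

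Conditions A1) and A2) are straightforward. On a small neighborhood $\Theta_0$ of $\Lambda_0$ inside the open cone of strictly conditionally negative definite matrices, the density of $Z$ is strictly positive on all of $(0,\infty)^k$, so A1) holds. For A2), differentiating~\eqref{exp_HR} and using standard Gaussian-conditioning identities, the top-order mixed derivative $\partial_{\{1,\ldots,k\}}V_\Lambda(z)$ reduces to $z_1^{-1}\cdots z_k^{-1}$ times a single Gaussian density $\varphi_{k-1}(\,\cdot\,;R^{(i)})$ whose argument and covariance depend smoothly on $(\Lambda,z)$; an additional $\theta$-derivative therefore remains continuous. More generally, each partial derivative $\partial_{\tau_j}V_\Lambda$ admits a closed-form representation as $\prod_{i\in\tau_j} z_i^{-1}$ times a $(|\tau_j|-1)$-dimensional Gaussian density in the $\tau_j$-coordinates times a Gaussian CDF $\Phi_{k-|\tau_j|}$ in the complementary coordinates, whose arguments are affine in the $\log z_\ell$ and in the entries of $\Lambda$ up to Schur-complement adjustments.

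The substantive step is A3), and this is the main obstacle: dominate $\partial_\theta\log V_\Lambda$ and $\partial_\theta\log\partial_{\tau_j}V_\Lambda$ uniformly in $\Lambda\in\Theta_0$ on $\{\|z\|=1\}$ by $c(z)=A\sum_i z_i^{-\alpha}$ with $\alpha<1/2$. Differentiating the above representations in $\theta$ produces score-type expressions: logarithmic derivatives of $\varphi$, which are polynomial in $\log z_i$, and logarithmic derivatives of $\Phi$, which are handled via Plackett's formula expressing $\partial\Phi/\partial\rho$ as a lower-dimensional Gaussian density divided by $\Phi$. On the unit sphere the delicate regime is $z_i\to 0$, which sends the Gaussian arguments to $+\infty$; classical Mills-ratio estimates then keep the resulting $\varphi/\Phi$ ratios and their $\theta$-derivatives at most polynomial in $|\log z_i|$. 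Since $|\log z_i|^p=o(z_i^{-\alpha})$ for every $p\geq 0$ and every $\alpha>0$, condition A3) holds for any $\alpha\in(0,1/2)$. Proposition~\ref{prop:DQM1} then yields differentiability in quadratic mean; identifiability is immediate because the bivariate margins recover the parameters $\lambda_{i,j}^2$; and, given non-singularity of $I_{\theta_0}$, Theorem~\ref{prop:mle} delivers the claimed asymptotic normality and efficiency of $\hat\theta_n^{\mathrm{mle}}$.
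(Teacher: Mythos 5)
Your proposal follows essentially the same route as the paper: conditions A1)--A3) are verified for the H\"usler--Reiss family (the paper's Lemma~\ref{lem:hr}), with the work concentrated in A3) via the closed-form Gaussian representation of $\partial_\tau V_\Lambda$ as $\prod z_i^{-1}$ times a Gaussian density times a Gaussian CDF, bounding the logarithmic derivatives of $\varphi$ polynomially in $\log z_i$ and those of $\Phi$ by Mills-ratio-type estimates, and concluding from $|\log z_i|^p=o(z_i^{-\alpha})$; Proposition~\ref{prop:DQM1} and Theorem~\ref{prop:mle} then finish exactly as you say. The only place your sketch is substantially thinner than the paper is the control of the multivariate truncated-moment ratio $\int_{-\infty}^{b}\|x\|^2\varphi(x;\hat R)\,\mathrm{d}x\big/\int_{-\infty}^{b}\varphi(x;\hat R)\,\mathrm{d}x$ as a component of $b$ tends to $-\infty$ (not $+\infty$ as you write --- the delicate regime is $\Phi\to0$), for which the paper needs a case split on $b_{\min}$, an exponential domination trick and H\"older's inequality rather than a literally ``classical'' one-dimensional Mills-ratio bound.
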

\begin{proof}
   The H\"usler--Reiss model satisfies conditions A1)--A3) by Lemma~\ref{lem:hr} in the Appendix.
   By Prop.~\ref{prop:DQM2} this implies differentiability in quadratic mean and,
   furthermore, asymptotic normality and efficiency of the maximum likelihood estimator 
   follow from Theorem~\ref{prop:mle}.
\end{proof}

H\"usler--Reiss distributions are the finite dimensional distributions of the max-stable Brown--Resnick
process, a popular class in spatial extreme value statistics that are parameterized
by conditionally negative definite variograms \citep{bro1977,kab2009}. The most common
parametric class are the fractional variograms, which we consider in the following corollary.
\begin{corollary}
  Consider a Brown--Resnick process on $\mathbb R^d$ with variogram
  coming from the parametric family 
  $$\gamma(h) = \|h\|_2^\alpha/\lambda, \quad (\lambda, \alpha) \in \Theta = (0,\infty)\times (0,2).$$
  Suppose that 
  the process is observed on a finite set of locations $t_1,\dots, t_m\in \mathbb R^d$
  such that the pairwise Euclidean distances are not all equal.
  Then the maximum likelihood estimator of $\theta=(\lambda,\alpha)$ is
  asymptotically normal.
\end{corollary}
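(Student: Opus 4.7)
The plan is to reduce to the H\"usler--Reiss setting of Proposition~\ref{prop:cond-hr} exactly as in the Schlather/extremal-$t$ corollary above. The joint distribution of $(Z(t_1),\ldots,Z(t_m))$ is a H\"usler--Reiss law whose parameter matrix is related to the variogram by $\lambda^2_{ij} = \gamma(t_i - t_j)/4 = \|t_i - t_j\|_2^\alpha/(4\lambda)$ for $i\ne j$. First I would verify that this matrix $\Lambda(\lambda,\alpha) = \{\lambda^2_{ij}\}$ is indeed strictly conditionally negative definite on the set of pairwise distinct locations: this is classical since $h\mapsto \|h\|_2^\alpha$ is a valid strictly conditionally negative definite function on $\mathbb{R}^d$ for every $\alpha \in (0,2)$, and scaling by $1/\lambda >0$ preserves this property.

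Next I would introduce the reparameterization map
\[
\Psi: \Theta \to \Psi(\Theta), \qquad (\lambda,\alpha)\mapsto \bigl\{\|t_i-t_j\|_2^\alpha/(4\lambda)\bigr\}_{1\le i,j \le m},
\]
and show that both $\Psi$ and $\Psi^{-1}$ are continuously differentiable on $\Theta$. Continuity of the derivative of $\Psi$ is immediate since each entry $(\lambda,\alpha)\mapsto \|t_i-t_j\|_2^\alpha/(4\lambda)$ is a smooth function (recall $\|t_i-t_j\|_2>0$). For the inverse, the assumption that not all pairwise Euclidean distances coincide allows me to pick indices with $\|t_{i_1}-t_{j_1}\|_2 \ne \|t_{i_2}-t_{j_2}\|_2$ and recover
\[
\alpha = \frac{\log\lambda^2_{i_1 j_1} - \log\lambda^2_{i_2 j_2}}{\log\|t_{i_1}-t_{j_1}\|_2 - \log\|t_{i_2}-t_{j_2}\|_2}, \qquad \lambda = \frac{\|t_{i_1}-t_{j_1}\|_2^\alpha}{4\lambda^2_{i_1 j_1}},
\]
which is a smooth function of the matrix entries. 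In particular $\Psi$ is a diffeomorphism onto its image and the family is identifiable.

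With the reparameterization in hand, I would transfer the conditions A1)--A3) verified for the H\"usler--Reiss family in Lemma~\ref{lem:hr} to the Brown--Resnick parameterization: the density support is unchanged under $\Psi$ (so A1 holds), and by the chain rule $\partial_{(\lambda,\alpha)} \log V_{(\lambda,\alpha)}$ and $\partial_{(\lambda,\alpha)} \log \partial_{\tau_i} V_{(\lambda,\alpha)}$ equal products of the corresponding $\Lambda$-derivatives with the Jacobian of $\Psi$, which is continuous and locally bounded on any compact neighborhood of $\theta_0$. The dominating function $c(z) = A\sum_i z_i^{-\alpha_0}$ from condition A3) is therefore inherited (up to a constant depending on $\|\mathrm{D}\Psi\|_\infty$ on the neighborhood), so A2)--A3) also transfer. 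Proposition~\ref{prop:DQM1} then yields differentiability in quadratic mean at $\theta_0=(\lambda_0,\alpha_0)$, and Theorem~\ref{prop:mle} gives asymptotic normality and efficiency of $\hat\theta_n^{\mathrm{mle}}$, provided the Fisher information matrix is non-singular — which follows from identifiability together with the smoothness of $\Psi$ and the non-singularity in the H\"usler--Reiss parameterization.

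The main obstacle I anticipate is simply bookkeeping the chain rule cleanly: the dominating bound in A3) must still have the form $A\sum_i z_i^{-\alpha}$ with $\alpha\in[0,1/2)$ after composing with $\Psi^{-1}$. This is harmless because $\Psi$ is independent of $z$, so the $z$-dependence of the bound is untouched and only the constant $A$ changes via the (locally uniform) bound on the Jacobian of $\Psi$. A secondary subtlety is handling $\alpha$ near the boundary $\alpha=2$; since $\theta_0 \in \mathrm{int}(\Theta) = (0,\infty)\times(0,2)$, I can work in a compact neighborhood bounded away from both boundaries, which keeps $\Psi$ and its derivatives uniformly controlled.
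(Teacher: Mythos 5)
Your proposal is correct and follows essentially the same route as the paper: reparameterize via the map $\Psi$ from $(\lambda,\alpha)$ to the matrix of variogram values, use the assumption of unequal pairwise distances to show $\Psi^{-1}$ is continuously differentiable, transfer conditions A1)--A3) from Lemma~\ref{lem:hr}, and conclude via differentiability in quadratic mean and Theorem~\ref{prop:mle}. You are in fact slightly more careful than the paper in two places: you correctly invoke Proposition~\ref{prop:DQM1} (the paper's proof cites Proposition~\ref{prop:DQM2}, an apparent typo, since the H\"usler--Reiss model is verified under A1)--A3)), and you make explicit the chain-rule bookkeeping showing the dominating function in A3) survives the reparameterization.
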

\begin{proof}
 Suppose that $\|t_1-t_2\|_2 \neq \|t_2-t_3\|_2$ and observe that the mapping
 $\Psi:\Theta \to \Psi(\Theta)$, $\theta = (\lambda,\alpha) \mapsto \{\gamma_{ij}\}_{1 \leq i,j \leq k}  = \{ \|t_i - t_j\|_2^\alpha/\lambda\}_{1 \leq i,j \leq k}$ 
 is continuously differentiable. Since
 $$ \alpha = \frac{\log\gamma_{12} - \log \gamma_{23}}{\log \|t_1-t_2\|_2 - \log \|t_2-t_3\|_2},\qquad \lambda= \frac{\|t_i-t_j\|_2^\alpha}{\gamma_{ij}},$$
 the same holds true for the inverse mapping $\Psi^{-1}$. Thus, from 
 Lemma~\ref{lem:hr} in the Appendix it follows that the Brown--Resnick 
 process satisfies conditions A1)--A3) as well.
 Hence, Prop.~\ref{prop:DQM2} and Thm.~\ref{prop:mle} imply differentiability
 in quadratic mean of the model and asymptotic normality and efficiency of 
 the maximum likelihood estimator.
\end{proof}

\bibliography{MaxStable_Bayesian}
\bibliographystyle{abbrvnat}

\appendix

\section{Proofs postponed from Section \ref{sec:asymptotics}} \label{sec:proof1}

\subsection{Proof of Prop.\ \ref{prop:DQM1}}

According to Lemma 7.6 in \citet{vdV98}, it suffices to verify the following 
two conditions:
\begin{itemize}
\item[i)] $\theta\mapsto \sqrt{f_\theta(z)}$ is continuously differentiable for every $z$;
\item[ii)] the application $\theta\mapsto I_\theta=\int \left(\frac{\partial_\theta f_\theta}{f_\theta}\right)
           \left(\frac{\partial_\theta f_\theta}{f_\theta}\right)^T f_\theta(z)\,\mathrm{d}z$ 
            is well defined and continuous.
\end{itemize}
Formula \eqref{eq:llh_Z} for the likelihood together with assumptions A1) and 
A2) imply point i). Indeed, $\theta\mapsto f_\theta(z)$ is either positive and 
continuously differentiable or identically equal to $0$; in both cases 
$\theta\mapsto \sqrt{f_\theta(z)}$ is continuously differentiable. We next 
focus on point ii) and prove first that the integral is well defined. From~\eqref{eq:llh_Z} 
we derive the upper bound
\begin{equation} \label{eq:bound-partial-deriv}
\|\partial_\theta f_\theta(z)\|_\infty \leq \left( \|\partial_\theta V_\theta(z)\|_\infty + 
\max_{\tau\in\mathscr{P}_k}\sum_{j=1}^{|\tau|} \left\|\frac{\partial_\theta \partial_{\tau_j} V_{\theta}(z)}{ \partial_{\tau_j}V_{\theta}(z)}\right\|_\infty \right)f_\theta(z).
\end{equation}
Recall that the exponent function $V_\theta$ is homogeneous of order $-1$, and,
inductively, it can be verified that the derivative 
$\partial_{\tau_i} V_\theta$ is homogeneous of order $-1 -|\tau_i|$, for all 
$\tau_i \subset \{1,\ldots,k\}$. Consequently, we have 
$\partial_\theta \log V_\theta(z) = \partial_\theta \log V_\theta(uz)$ and
$\partial_\theta \log \partial_{\tau_i}V_{\theta}(z) = \partial_\theta \log \partial_{\tau_i}V_{\theta}(uz)$
for all $u>0$, $z \in (0,\infty)^k$ and $\tau_i \subset \{1,\ldots,k\}$.
Thus, assumption A3) can be reformulated as
\begin{align*}
 \|\partial_\theta \log V_\theta(z)\|_\infty ={}& \left\|\partial_\theta \log V_\theta\left(\frac z {\|z\|}\right)\right\|_\infty \leq c\left(\frac{z}{\|z\|}\right) \\
 \mbox{and} \quad \|\partial_\theta \log \partial_{\tau_i} V_\theta(z)\|_\infty ={}& \left\|\partial_\theta \log \partial_{\tau_i} V_\theta\left(\frac z {\|z\|}\right)\right\|_\infty \leq c\left(\frac{z}{\|z\|}\right)
\end{align*}
for all $z \in (0,\infty)^k$. It follows that the right hand side of 
\eqref{eq:bound-partial-deriv} is upper bounded by 
$V_\theta(z)c(z/\|z\|)+kc(z/\|z\|)$ and, thanks to the general bound 
$V_\theta(z)\leq \sum_{i=1}^k z_i^{-1}$, we deduce
\begin{equation}\label{eq:scorebound}
\left\|\frac{\partial_\theta f_\theta(z)}{f_\theta(z)}\right\|_\infty \leq  \left( k+\sum_{i=1}^k z_i^{-1}\right) c\left(\frac{z}{\|z\|}\right).
\end{equation}
Using H\"older's inequality with $p,q\geq 1$ such that $1/p+1/q=1$, we obtain
\begin{eqnarray}
& &\int  \left\| \left(\frac{\partial _\theta f_\theta}{f_\theta }\right) \left(\frac{\partial _\theta f_\theta}{f_\theta }\right)^T\right\|_\infty  f_\theta(z)\mathrm{d}z 
  \leq  \int \left( k+\sum_{i=1}^k z_i^{-1}\right)^2 c\left(\frac{z}{\|z\|}\right)^2 f_\theta(z)\mathrm{d}z \nonumber\\ 
&\leq& \left(\int \left( k+\sum_{i=1}^k z_i^{-1}\right)^{2p} f_\theta(z)\mathrm{d}z\right)^{1/p} \left(\int c\left(\frac{z}{\|z\|}\right)^{2q} f_\theta(z)\mathrm{d}z\right)^{1/q}. \label{eq:scorebound2}
\end{eqnarray}
We prove that the two integrals in \eqref{eq:scorebound2} are finite. For the
first integral, we use the fact that under $f_\theta(z)\mathrm{d}z$, $z_i^{-1}$
has an  exponential distribution with mean $1$ and hence finite moment of all 
orders. The sum $ k+\sum_{i=1}^k z_i^{-1}$ has thus a finite moment of order 
$2p$ for all $p\geq 1$. Choosing $p$ large for the first integral in the right
hand side of \eqref{eq:scorebound2}, we can choose $q>1$ close to $1$ in the 
second integral. In the upper bound $c(z/\|z\|)=A\sum_{i=1}^k\|z\|^{\alpha}z_i^{-\alpha}$,
one can chose the norm $\|z\|=\|z\|_\infty$ as all norms on $\mathbb{R}^k$ are
equivalent. Then $\|z\|_\infty$ has a Fr\'echet distribution with shape parameter
1 and a scale parameter between 1 and $k$ (depending on $\theta$) under 
$f_\theta(z)\mathrm{d}z$ and $\|z\|_\infty^\alpha$ has finite moment of order 
$r<1/\alpha$; similarly $z_i^{-1}$ has a standard exponential distribution so that 
$z_i^{-\alpha}$ has finite moment of any order $r>0$. By H\"older's inequality,
the product $\|z\|_\infty^{\alpha}z_i^{-\alpha}$ has finite moments of order 
$r<1/\alpha$. Since $1/\alpha>2$, one can chose $q>1$ close to $1$ so that 
$\|z\|_\infty^{\alpha}z_i^{-\alpha}$ has a finite moment of order $2q$. Then the 
second integral in \eqref{eq:scorebound2} is finite. We deduce that 
$\theta\mapsto I_\theta$ is well defined. 

We next consider continuity. For all continuous and bounded function 
$F:\mathbb{R}^k\to [-M,M]$, $M>0$, the mapping  
\begin{equation}\label{eq:conv}
\theta \mapsto \int F\left(\frac{\partial _\theta f_\theta (z)}{f_\theta (z)}\right) f_\theta(z) \,\mathrm{d}z
\end{equation}
is continuous. Indeed, setting 
$\theta\mapsto \tilde F(\theta,z)=F \left\{\partial _\theta f_\theta (z) / f_\theta (z)\right\}$,
we have
\begin{eqnarray*}
&&\left|\int \tilde F(\theta_n,z)f_{\theta_n}(z) \,\mathrm{d}z-\int \tilde F(\theta,z)f_\theta(z) \,\mathrm{d}z\right|\\
&\leq & \int |\tilde F(\theta_n,z)\{f_{\theta_n}(z)-f_{\theta}(z)\}|\,\mathrm{d}z+
\int |\tilde F(\theta_n,z)-\tilde F(\theta,z)|f_{\theta}(z)\,\mathrm{d}z \\
&\leq & M\int |f_{\theta_n}(z)-f_{\theta}(z)|\,\mathrm{d}z+\int |\tilde F(\theta_n,z)-\tilde F(\theta,z)|f_{\theta}(z)\,\mathrm{d}z. 
\end{eqnarray*}
Applying Scheff\'e's lemma for the first term and Lebesgue's dominated 
convergence theorem for the second, both terms converge to zero if $\theta_n \to \theta$ for $n\to\infty$. This proves
the continuity of \eqref{eq:conv} and implies that the score defined by 
\[
S_\theta=\frac{\partial _\theta f_\theta (Z_\theta)}{f_\theta (Z_\theta)}\quad \mbox{with } Z_\theta\sim f_\theta(z)\mathrm{d}z,
\]
is continuous in distribution, i.e., $S_{\theta_n}\stackrel{d}\to S_\theta$ as
$\theta_n\to\theta$. Continuity of the information matrix 
$I_\theta=\mathbb{E} \left(S_\theta S_\theta^T\right)$ follows then from the uniform 
integrability condition 
\[
\sup_{\theta\in\Theta_0} \mathbb{E}\left[ \|S_\theta\|_\infty^{2+\varepsilon} \right]<\infty\quad \mbox{with } \varepsilon>0,
\] 
which is  a consequence of the bound \eqref{eq:scorebound}, since for all 
$\theta\in\Theta_0$,
\[
\mathbb{E}\left[ \|S_\theta\|_\infty^{2+\varepsilon} \right]
\leq \int\left(k+\sum_{i=1}^k z_i^{-1}\right)^{2+\varepsilon} c\left(\frac{z}{\|z\|_\infty}\right)^{2+\varepsilon}  f_\theta(z)\,\mathrm{d}z. 
\]
The right hand side can be upper bounded independently of $\theta\in\Theta_0$, 
similarly as in \eqref{eq:scorebound2} where the bound does not depend on 
$\theta$ as $\|z\|_\infty$ has a Fr\'echet distribution with shape parameter 1
and scale parameter bounded between 1 and $k$ and $z_i^{-1}$ has a unit
exponential distribution under $f_\theta(z)\mathrm{d}z$.

\subsection{Proof of Prop.\ \ref{prop:DQM2}}
We show that assumptions B1)--B3) imply assumptions A1)--A3). As assumptions
A1) and B1) are identical, we just verify that conditions B2) and B3)
together imply conditions A2) and A3).

For the sake of simplicity, we assume that the angular density is positive on 
$S^{k-1}_I$, $I=\{1,\ldots,k\}$ and vanishes on the lower-dimensional faces
$S^{k-1}_I$, $I\subsetneq \{1,\ldots,k\}$. Then, using formulas \eqref{eq:V},
\eqref{eq:Vtau} and \eqref{eq:lambda},  we have
\begin{eqnarray*}
\partial_\theta \partial_{\tau_j}V_{\theta}(z)
&=& -\partial_\theta \int_{(0,z_{\tau_j^c})} \|(z_{\tau_j},u)\|_1^{-k-1} h_\theta\left(\frac{(z_{\tau_j},u)}{\|(z_{\tau_j},u)\|_1}\right)\, \mathrm{d}u.
\end{eqnarray*}
A change of derivation and integration gives 
\begin{eqnarray*}
\partial_\theta \partial_{\tau_j}V_{\theta}(z)
&=&  - \int_{(0,z_{\tau_j^c})} \|(z_{\tau_j},u)\|_1^{-k-1} 
\partial_\theta h_\theta\left(\frac{(z_{\tau_j},u)}{\|(z_{\tau_j},u)\|_1}\right)\, \mathrm{d}u,
\end{eqnarray*}
which is allowed since by B3)
\[
 \|(z_{\tau_j},u)\|_1^{-k-1} \left\|\partial_\theta h_\theta\left(\frac{(z_{\tau_j},u)}{\|(z_{\tau_j},u)\|_1}\right)\right\|_\infty
\leq  B^+\|(z_{\tau_j},u)\|_1^{-1-\sum_{i=1}^k \beta_i^+}  \prod_{i \in \tau_j}z_i^{-1+\beta^+_i} \prod_{i \notin \tau_j} u_i^{-1+\beta^+_i}  
\]
and 
\[
\int_{(0,z_{\tau_j^c})} \|(z_{\tau_j},u)\|_1^{-1-\sum_{i=1}^k \beta_i^+} \prod_{i \notin \tau_j} u_i^{-1+\beta_i^+} \, \mathrm{d}u
\leq \|z_{\tau_j}\|_1^{-1-\sum_{i=1}^k \beta_i^+}\prod_{i \notin \tau_j} \frac{z_i^{\beta_i^+}}{\beta_i^+} < \infty.
\]
To prove condition A3), we will make use of the following lemma.
\begin{lemma}\label{lem1}
Let $\beta_1,\ldots,\beta_k>0$ and $\beta=\sum_{i=1}^k\beta_i$. There are
constants $C^-,C^+>0$ such that, for all $z\in (0,\infty)^k$,
\[
\int_{(0,z_{\tau_j^c})} \|(z_{\tau_j},u)\|_1^{-1-\beta} \prod_{i \notin \tau_j} u_i^{-1+\beta_i} \mathrm{d}u  
\leq C^+\|z_{\tau_j}\|_\infty^{-1-\sum_{i\in\tau_j}\beta_i}.
\]
and
\[
\int_{(0,z_{\tau_j^c})} \|(z_{\tau_j},u)\|_1^{-1-\beta}\prod_{i \notin \tau_j} u_i^{-1+\beta_i} \mathrm{d}u 
\geq C^-\|z_{\tau_j}\|_\infty^{-1-\sum_{i\in\tau_j}\beta_i} \prod_{i\in\tau_j^c}\left(\frac{z_i}{\|z_{\tau_j}\|_\infty}\wedge 1\right)^{\beta_i}
\]
\end{lemma}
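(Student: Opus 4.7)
The plan is to exploit the Dirichlet-type scaling symmetry of the integrand. Setting $a := \|z_{\tau_j}\|_\infty$ and substituting $u_i = a v_i$ for $i \in \tau_j^c$, an elementary bookkeeping of exponents (the Lebesgue measure contributes $a^{|\tau_j^c|}$, the weight $\prod u_i^{\beta_i - 1}$ contributes $a^{-|\tau_j^c| + \sum_{i \in \tau_j^c}\beta_i}$, and the kernel $\|(z_{\tau_j},u)\|_1^{-1-\beta}$ contributes $a^{-1-\beta}$) extracts an overall factor $a^{-1 - \sum_{i \in \tau_j}\beta_i}$, matching the target. Both bounds then reduce to controlling a universal integral in $v$, from above by finiteness and from below by restricting the domain. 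The degenerate case $\tau_j = \{1,\ldots,k\}$ follows directly from the equivalence $\|\cdot\|_1 \asymp \|\cdot\|_\infty$.

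For the upper bound I would use $\|z_{\tau_j}\|_1 \geq \|z_{\tau_j}\|_\infty = a$ to observe that, after the substitution, $\|(z_{\tau_j},u)\|_1 \geq a\bigl(1 + \sum_{i \in \tau_j^c} v_i\bigr)$, and then enlarge the integration region from $\prod_{i \in \tau_j^c}(0, z_i/a)$ to $(0,\infty)^{|\tau_j^c|}$. This reduces the problem to the Liouville--Dirichlet integral
\[
C^+ := \int_{(0,\infty)^{|\tau_j^c|}} \Bigl(1 + \sum_{i \in \tau_j^c} v_i\Bigr)^{-1-\beta} \prod_{i \in \tau_j^c} v_i^{\beta_i - 1} \, \mathrm{d}v,
\]
which is finite because the convergence condition $1 + \beta - \sum_{i \in \tau_j^c}\beta_i = 1 + \sum_{i \in \tau_j}\beta_i > 0$ holds. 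Its explicit gamma-function value is classical but not needed.

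For the lower bound I would restrict the integration to $u_i \in (0, z_i \wedge a)$ for $i \in \tau_j^c$. On this sub-region $\|(z_{\tau_j},u)\|_1 \leq \|z_{\tau_j}\|_1 + |\tau_j^c|\, a \leq k a$, so the kernel is bounded below by $(ka)^{-1-\beta}$. The residual integral factorizes as a product of one-dimensional integrals $\prod_{i \in \tau_j^c} (z_i \wedge a)^{\beta_i}/\beta_i$, and the identity $(z_i \wedge a)^{\beta_i} = a^{\beta_i}(z_i/a \wedge 1)^{\beta_i}$ collects the desired power of $a$ together with the factor $\prod_{i \in \tau_j^c}(z_i/a \wedge 1)^{\beta_i}$. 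One may take $C^- := k^{-1-\beta}\prod_{i \in \tau_j^c}\beta_i^{-1}$.

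I do not expect a serious obstacle. The only point requiring care is the exponent arithmetic, which must produce the same power $a^{-1 - \sum_{i \in \tau_j}\beta_i}$ in both bounds, and the verification of the Dirichlet convergence condition; both are routine. The strategy relies solely on monotonicity with respect to the integration domain and the homogeneity of the kernel, so it is insensitive to the cardinality of $\tau_j$.
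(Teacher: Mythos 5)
Your proof is correct, and for the upper bound it takes a genuinely different route from the paper. The paper first replaces $\|\cdot\|_1$ by $\|\cdot\|_\infty$ and then decomposes the integral exactly according to which coordinate attains $\|(z_{\tau_j},u)\|_\infty$ — one term where the maximum is realized by $z_{\tau_j}$ (this term alone gives the lower bound, essentially identical to your restricted-domain argument) plus one term for each $i_0\in\tau_j^c$ where $u_{i_0}$ is maximal, each of which is then estimated separately to get the upper bound. You instead keep the $\ell_1$-norm, rescale by $a=\|z_{\tau_j}\|_\infty$ to pull out the factor $a^{-1-\sum_{i\in\tau_j}\beta_i}$ by homogeneity, enlarge the domain to the full orthant, and reduce to the convergent Liouville--Dirichlet integral $\int_{(0,\infty)^{|\tau_j^c|}}(1+\sum_i v_i)^{-1-\beta}\prod_i v_i^{\beta_i-1}\,\mathrm{d}v$, whose finiteness is exactly the condition $1+\sum_{i\in\tau_j}\beta_i>0$. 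Your exponent bookkeeping is right in both bounds, the degenerate case $\tau_j^c=\emptyset$ is correctly dispatched by norm equivalence, and the explicit $C^-=k^{-1-\beta}\prod_{i\in\tau_j^c}\beta_i^{-1}$ checks out. What your approach buys is the elimination of the case analysis over the maximizing coordinate and of the term-by-term estimates; what the paper's approach buys is an exact closed form for the sup-norm version of the integral before any bounding, which makes the provenance of both bounds visible from a single identity. Either argument suffices for the application in Proposition \ref{prop:DQM2}.
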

\begin{proof} 
As all norms are equivalent on $\mathbb R^k$, we can replace the norm $\|\cdot\|_1$ 
by the sup-norm $\|\cdot\|_\infty$. Since $\|(z_{\tau_j},u)\|_\infty$ can be either 
equal to $\|z_{\tau_j}\|_\infty$ (if $u_i<\|z_{\tau_j}\|_\infty$ for all 
$i\in\tau_j^c$) or equal to $u_{i_0}$ for some $i_0\in\tau_j^c$ (if $u_{i_0}>\|z_{\tau_j}\|_\infty$
and $u_i<u_{i_0}$ for all $i\in\tau_j^c\setminus\{i_0\}$), we obtain
\begin{eqnarray*}
&&\int_{(0,z_{\tau_j^c})} \|(z_{\tau_j},u)\|_\infty^{-1-\beta} \prod_{i \in \tau_j^c} u_i^{-1+\beta_i} \mathrm{d}u\\
&=& \int_{(0,z_{\tau_j^c}\wedge \|z_{\tau_j}\|_\infty)} \|z_{\tau_j}\|_\infty^{-1-\beta} \prod_{i \in \tau_j^c} u_i^{-1+\beta_i} \mathrm{d}u\\
&&\quad +\sum_{i_0\in\tau_j^c}\int_{\|z_{\tau_j}\|_\infty<u_{i_0}<z_{i_0},\ u_i<u_{i_0}\wedge z_i, i \in \tau_j^c \setminus\{i_0\}} u_{i_0}^{-2-\beta+\beta_{i_0}} 
\prod_{i \in \tau_j^c\setminus\{i_0\}} u_i^{-1+\beta_i} \mathrm{d}u\\
&=& \|z_{\tau_j}\|_\infty^{-1-\beta}\prod_{i\in\tau_j^c}\frac{(z_i\wedge \|z_{\tau_j}\|_\infty)^{\beta_i}}{\beta_i}\\
&&\quad +\sum_{i_0\in\tau_j^c}\int_{\|z_{\tau_j}\|_\infty<u_{i_0}<z_{i_0}} u_{i_0}^{-2-\beta+\beta_{i_0}}\prod_{i \in \tau_j^c\setminus\{i_0\}} \frac{(u_{i_0}\wedge z_i)^{\beta_i}}{\beta_i} \mathrm{d}u_{i_0}
\end{eqnarray*}
The lower bounds corresponds simply to the first term. For the upper bound, we note
that 
\[
\|z_{\tau_j}\|_\infty^{-1-\beta}\prod_{i\in\tau_j^c}\frac{(z_i\wedge \|z_{\tau_j}\|_\infty)^{\beta_i}}{\beta_i}\leq \left(\prod_{i\in\tau_j^c}\frac{1}{\beta_i}\right)\|z_{\tau_j}\|_\infty^{-1-\sum_{i\in\tau_j}\beta_i}
\]
and, for $i_0 \in \tau_j^c$,
\begin{eqnarray*}
&&\int_{\|z_{\tau_j}\|_\infty<u_{i_0}<z_{i_0}} u_{i_0}^{-2-\beta+\beta_{i_0}}\prod_{i \in \tau_j^c\setminus\{i_0\}} \frac{(u_{i_0}\wedge z_i)^{\beta_i}}{\beta_i} \mathrm{d}u_{i_0}\\
&\leq& \left(\prod_{i\in\tau_j^c\setminus\{i_0\}}\frac{1}{\beta_i}\right)\int_{\|z_{\tau_j}\|_\infty<u_{i_0}}u_{i_0}^{-2-\sum_{i\in\tau_j}\beta_i}\mathrm{d}u_{i_0}\\
&\leq &\left(\prod_{i\in\tau_j^c\setminus\{i_0\}}\frac{1}{\beta_i}\right)\left(1+\sum_{i\in\tau_j}\beta_i\right)^{-1}\|z_{\tau_j}\|_\infty^{-1-\sum_{i\in\tau_j}\beta_i}.
\end{eqnarray*}
The upper bound follows.
\end{proof}
We now prove that condition A3) is satisfied. The upper bound from condition B3)
entails
\begin{eqnarray}
 \|\partial_\theta \partial_{\tau_j}V_{\theta}(z)\|_\infty
&\leq &  \int_{(0,z_{\tau_j^c})} \|(z_{\tau_j},u)\|_1^{-k-1}\left\|\partial_\theta h_\theta\left(\frac{(z_{\tau_j},u)}{\|(z_{\tau_j},u)\|_1}\right)\right\|_\infty\, \mathrm{d}u\nonumber\\
&\leq &B^+ \prod_{i \in \tau_j}z_i^{-1+\beta^+_i} \int_{(0,z_{\tau_j^c})} \|(z_{\tau_j},u)\|_1^{-1-\sum_{i=1}^k \beta_i^+}   \prod_{i \notin \tau_j} u_i^{-1+\beta^+_i} \mathrm{d}u.\label{prop8:proof1}
\end{eqnarray}
We upper bound the integral on the right hand side using H\"older's inequality 
and Lemma~\ref{lem1}. Let $\varepsilon>0$ be as in B3), $q=1+\varepsilon$, $p>1$
such that  $1/p+1/q=1$ and $\tilde \beta_i=\beta_i^+-\beta_i^-/q$. Note that 
$q>\beta_i^-/\beta_i^+$ implies $\tilde\beta_i>0$. With the notation
$\beta^+=\sum_{i=1}^k \beta_i^+$, $\beta^-=\sum_{i=1}^k \beta_i^-$ and 
$\tilde\beta=\sum_{i=1}^k \tilde \beta_i$, we have
\begin{eqnarray}
 &&\int_{(0,z_{\tau_j^c})} \|(z_{\tau_j},u)\|_1^{-1- \beta^+}  \prod_{i \notin \tau_j} u_i^{-1+\beta^+_i} \mathrm{d}u\nonumber\\
&=&\int_{(0,z_{\tau_j^c})} \left(\|(z_{\tau_j},u)\|_1^{(-1-\beta^-)/q}  
     \prod_{i \notin \tau_j} u_i^{(-1+\beta^-_i)/q} \right)\left(\|(z_{\tau_j},u)\|_1^{-1+1/q-\tilde\beta}  \prod_{i \notin \tau_j} u_i^{-1+1/q+\tilde\beta_i} \right) \mathrm{d}u\nonumber\\
&\leq & \left(\int_{(0,z_{\tau_j^c})}\|(z_{\tau_j},u)\|_1^{-1-\beta^-}  \prod_{i \notin \tau_j} u_i^{-1+\beta^-_i}  \mathrm{d}u \right)^{1/q}\left(\int_{(0,z_{\tau_j^c})} \|(z_{\tau_j},u)\|_1^{-1-p\tilde\beta} 
  \prod_{i \notin \tau_j} u_i^{-1+p\tilde\beta_i}  \mathrm{d}u \right)^{1/p}.\label{prop8:proof2}
\end{eqnarray}
The lower bound from condition B3) entails
\begin{eqnarray}
 |\partial_{\tau_j}V_{\theta}(z)|
&= &  \int_{(0,z_{\tau_j^c})} \|(z_{\tau_j},u)\|_1^{-k-1} h_\theta\left(\frac{(z_{\tau_j},u)}{\|(z_{\tau_j},u)\|_1}\right) \mathrm{d}u\nonumber\\
&\geq &B^- \prod_{i \in \tau_j}z_i^{-1+\beta^-_i} \int_{(0,z_{\tau_j^c})} \|(z_{\tau_j},u)\|_1^{-1-\sum_{i=1}^k \beta_i^-}   \prod_{i \notin \tau_j} u_i^{-1+\beta^-_i} \mathrm{d}u.\label{prop8:proof3}
\end{eqnarray}
Equations \eqref{prop8:proof1},\eqref{prop8:proof2} and \eqref{prop8:proof3} imply 
the upper bound for the ratio
\begin{eqnarray*}
&&\left\|\frac{\partial_\theta \partial_{\tau_j}V_{\theta}(z)}{\partial_{\tau_j}V_{\theta}(z)}\right\|_\infty
\leq  \frac{B^+}{B^-} \prod_{i \in \tau_j}z_i^{-(\beta^-_i-\beta_i^+)}\frac{ \left(\int_{(0,z_{\tau_j^c})} \|(z_{\tau_j},u)\|_1^{-1-p\tilde\beta}  \prod_{i \notin \tau_j} u_i^{-1+p\tilde\beta_i}  \mathrm{d}u \right)^{1/p}}{\left(\int_{(0,z_{\tau_j^c})} \|(z_{\tau_j},u)\|_1^{-1-\sum_{i=1}^k \beta_i^-}   \prod_{i \notin \tau_j} u_i^{-1+\beta^-_i} \mathrm{d}u \right)^{1-1/q}}.
\end{eqnarray*}
Then, Lemma \ref{lem1} implies
\begin{eqnarray*}
&&\left\|\frac{\partial_\theta \partial_{\tau_j}V_{\theta}(z)}{\partial_{\tau_j}V_{\theta}(z)}\right\|_\infty\\
&\leq & \frac{B^+(C^+)^{1/p}}{B^-(C^-)^{1-1/q}} \prod_{i \in \tau_j}z_i^{-(\beta^-_i-\beta_i^+)}\frac{\|z_{\tau_j}\|_\infty^{-(1+\sum_{i\in\tau_j}p\tilde{\beta}_i)/p}}{\|z_{\tau_j}\|_\infty^{-(1+\sum_{i\in\tau_j}\beta_i^-)(1-1/q)} \prod_{i\in\tau_j^c}\left(\frac{z_i}{\|z_{\tau_j}\|_\infty}\wedge 1\right)^{\beta_i^-(1-1/q)}}\\
&= & \frac{B^+(C^+)^{1/p}}{B^-(C^-)^{1-1/q}}\prod_{i \in \tau_j}\left(\frac{z_i}{\|z_{\tau_j}\|_\infty}\right)^{-(\beta^-_i-\beta_i^+)}\prod_{i\in\tau_j^c}\left(\frac{z_i}{\|z_{\tau_j}\|_\infty}\wedge 1\right)^{-(1-1/q)\beta_i^-}.
\end{eqnarray*}
For $\|z\|_\infty=1$, we use the bounds $\|z_{\tau_j}\|_\infty \leq 1$, $z_i \leq 1$ and $0<\beta_i^- - \beta_i^+ < (1-1/q)\beta_i^-$
and obtain
\[
\left\|\frac{\partial_\theta \partial_{\tau_j}V_{\theta}(z)}{\partial_{\tau_j}V_{\theta}(z)}\right\|
\leq A \prod_{i=1}^k\left( \frac{1}{z_i}\right)^{(1-1/q)\beta_i^-}\leq A \max_{i=1}^k \left( \frac{1}{z_i}\right)^\alpha \leq  A\sum_{i=1}^k\left( \frac{1}{z_i}\right)^\alpha
\]
with $A > 0$, $\alpha=(1-1/q)\sum_{i=1}^k \beta_i^-$ and $q=1+\varepsilon$. 
The assumption $2\varepsilon<\{\sum_{i=1}^k \beta_i^-\}^{-1}$ in B3) ensures that 
$\alpha\in (0,1/2)$ which proves A3) for $\|\cdot\|_\infty$.

\section{Proofs postponed from Section \ref{sec:exam}} \label{sec:proof2}

\subsection{Logistic model}
\begin{lemma}\label{lem:log}
  The logistic model introduced in Section \ref{sec:log} satisfies conditions A1)--A3).
\end{lemma}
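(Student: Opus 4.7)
My plan is to verify A1)--A3) directly from the explicit form of the logistic exponent function $V_\theta(z) = s(z,\theta)^\theta$ with $s(z,\theta) = \sum_{i=1}^k z_i^{-1/\theta}$. Condition A1) is immediate since $f_\theta$ is strictly positive on $(0,\infty)^k$ for every $\theta \in (0,1)$. For A2), an induction on $|\tau|$ using the product and chain rules yields the closed form
\begin{equation*}
\partial_\tau V_\theta(z) = C_{|\tau|}(\theta)\, s(z,\theta)^{\theta - |\tau|} \prod_{i\in\tau} z_i^{-1/\theta - 1}, \qquad C_d(\theta) = (-1)^d \theta^{-(d-1)} \prod_{j=1}^{d-1}(\theta - j),
\end{equation*}
which is $C^\infty$ in $(\theta,z)$ on any neighborhood $\Theta_0 \subset (0,1)$ of $\theta_0$ and on $(0,\infty)^k$; in particular A2) holds upon taking $\tau = \{1,\ldots,k\}$.

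The substantive work is A3). Using $\partial_\theta s = \theta^{-2}\sum_i z_i^{-1/\theta}\log z_i$, I compute
\begin{equation*}
\partial_\theta \log V_\theta = \log s + \frac{1}{\theta}\cdot\frac{\sum_i z_i^{-1/\theta}\log z_i}{s},
\end{equation*}
and, writing $d=|\tau|$,
\begin{equation*}
\partial_\theta \log |\partial_\tau V_\theta| = \frac{C_d'(\theta)}{C_d(\theta)} + \log s + \frac{\theta - d}{\theta^2}\cdot\frac{\sum_i z_i^{-1/\theta}\log z_i}{s} + \frac{1}{\theta^2}\sum_{i\in\tau}\log z_i.
\end{equation*}
After shrinking $\Theta_0$ to a compact subset of $(0,1)$, the factor $C_d'/C_d$ and all the powers of $\theta$ are bounded in $\theta$, so A3) reduces to bounding the $z$-dependent pieces uniformly for $\|z\|_\infty=1$.

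This is where I invoke three elementary facts: (i) for every $\alpha > 0$ there is $C_\alpha$ with $|\log z_i| \leq C_\alpha z_i^{-\alpha}$ for $z_i \in (0,1]$; (ii) on $\|z\|_\infty = 1$ one has $1 \leq s \leq k z_{\min}^{-1/\theta}$, hence $0 \leq \log s \leq \log k + \theta^{-1}|\log z_{\min}|$; and (iii) the quantity $s^{-1}\sum_i z_i^{-1/\theta}\log z_i$ is a convex combination of the nonpositive numbers $\log z_i$, so its modulus is at most $\max_i |\log z_i|$. Plugging (i)--(iii) in bounds every term by $A\sum_i z_i^{-\alpha}$ with $\alpha > 0$ arbitrarily small; choosing $\alpha \in (0,1/2)$ gives precisely the required form of $c(z)$ for both $\partial_\theta \log V_\theta$ and $\partial_\theta \log \partial_\tau V_\theta$.

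The only potential obstacle is the bookkeeping in the inductive formula for $\partial_\tau V_\theta$ and the uniformity of the constants in $\theta$, but the scale-free structure of the logistic family makes both routine. There is no genuine analytic difficulty: all estimates ultimately rest on the observation that $|\log z_i|$ grows slower than any negative power of $z_i$ near the origin, combined with the fact that $\theta$ can be kept bounded away from the endpoints $0$ and $1$ of its parameter interval.
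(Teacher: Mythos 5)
Your proof is correct and follows essentially the same route as the paper's: the same explicit formula for $\partial_\tau V_\theta$, the same expression for $\partial_\theta\log V_\theta$ and $\partial_\theta\log|\partial_\tau V_\theta|$, and the same key estimate $|\log z_i|\leq C_\alpha z_i^{-\alpha}$ on $\{z:\|z\|_\infty=1\}$ with $\alpha$ taken small enough to land in $[0,1/2)$. The only cosmetic differences are that you package the combinatorial constant as $C_d(\theta)$ with log-derivative $C_d'/C_d$ (the paper writes out the sum $\sum_i i/(i\theta-\theta^2)$ explicitly) and that you make the convex-combination bound on $s^{-1}\sum_i z_i^{-1/\theta}\log z_i$ explicit.
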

\begin{proof}
  To verify assumption A3) for the logistic model with $\theta\in (0,1)$, we compute
  \begin{eqnarray}\label{eq:log}
   \partial_\theta \log V_\theta(z)&=& \log\left(\sum_{i=1}^k z_i^{-1/\theta}\right)+\frac{1}{\theta}\frac{\sum_{i=1}^k z_i^{-1/\theta}\log z_i}{\sum_{i=1}^k z_i^{-1/\theta}}.
  \end{eqnarray}
  For the first term we get for $\|z\| = 1$
  \begin{eqnarray*}
   \left | \log\left(\sum_{i=1}^k z_i^{-1/\theta}\right) \right | \leq \log\left\{ k \left(\min_{i=1,\dots,k} z_i\right)^{-1/\theta}\right\}  \leq C \left(\min_{i=1,\dots,k} z_i\right)^{-\epsilon/\theta} \leq  C\sum_{i=1}^k z_i^{-\epsilon/\theta},
  \end{eqnarray*}
  for any $\epsilon >0 $ and some $C>0$. For the second term we have
  \begin{eqnarray*}
   \left | \frac{1}{\theta}\frac{\sum_{i=1}^k z_i^{-1/\theta}\log z_i}{\sum_{i=1}^k z_i^{-1/\theta}}  \right | \leq  C_1(\theta) \sum_{i=1}^k |\log z_i| \leq C_2(\theta) \sum_{i=1}^k z_i^{-\epsilon},
  \end{eqnarray*}
  for positive continuous functions $C_1$ and $C_2$ of $\theta$.
  Next, consider
  \begin{eqnarray*}
   - \partial_{\tau_j} V_{\theta}(z)&=& \prod_{i=1}^{|\tau_j| -1} \left(\frac{i}{\theta} -1 \right)\left(\sum_{i=1}^k z_i^{-1/\theta}\right)^{\theta - |\tau_j|}
   \prod_{i\in\tau_j} z_i^{-1/\theta -1}, 
  \end{eqnarray*}
  and thus
  \begin{eqnarray*} 
   \partial_\theta \log \{- \partial_{\tau_j} V_{\theta}(z)\}&=& \log\left(\sum_{i=1}^k z_i^{-1/\theta}\right)+ \frac{\theta-|\tau_j|}{{\theta^2}}\frac{\sum_{i=1}^k z_i^{-1/\theta}\log z_i}{\sum_{i=1}^k z_i^{-1/\theta}}\\
   && + \frac{1}{\theta^2}\sum_{i\in\tau_j}\log z_i - \sum_{i=1}^{|\tau_j| -1} \frac{i}{i\theta - \theta^2}.
  \end{eqnarray*}
  All terms can be treated similarly as for \eqref{eq:log}. Choosing $\epsilon > 0$
  sufficiently small yields~A3).
\end{proof}

\subsection{Dirichlet model}

\begin{lemma}\label{lem:dir}
  The Dirichlet model introduced in Section \ref{sec:dir} satisfies conditions B1)--B3).
\end{lemma}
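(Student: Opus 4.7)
The plan is to verify B1)--B3) directly from the closed form of the Dirichlet angular density, which can be rewritten on the interior of the simplex, after collecting the $(\sum_j\alpha_jw_j)^{\alpha_i-1}$ factors, as
$$h_\theta(w)=\frac{\Gamma(1+\sum_i\alpha_i)}{k}\prod_i\frac{\alpha_i^{\alpha_i}}{\Gamma(\alpha_i)}\,\frac{1}{\big(\sum_i\alpha_iw_i\big)^{1+\sum_i\alpha_i}}\prod_{i=1}^kw_i^{\alpha_i-1},$$
while $h_\theta$ vanishes on every lower-dimensional face. Thus only $I=\{1,\ldots,k\}$ needs to be considered. I would fix a small compact neighborhood $\Theta_0$ of $\theta_0=(\alpha_{0,1},\ldots,\alpha_{0,k})$ in which each $\alpha_i$ is constrained to $[\alpha_{0,i}-\delta,\alpha_{0,i}+\delta]$ with $\delta>0$ to be tuned later. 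Then B1) is immediate since $f_\theta>0$ on all of $(0,\infty)^k$, and B2) follows by inspection because the prefactor, the denominator and the powers all depend smoothly on $(\theta,w)$ with $\sum_i\alpha_iw_i$ bounded away from $0$ and $\infty$ on $\Theta_0\times S^{k-1}_I$.

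For B3), I would first note that on $\Theta_0$ the prefactor and the denominator are bounded above and below by positive constants, giving two-sided bounds
$$c_1\prod_{i=1}^kw_i^{\alpha_i-1}\leq h_\theta(w)\leq c_2\prod_{i=1}^kw_i^{\alpha_i-1}.$$
Differentiating in $\alpha_j$ produces, besides bounded factors (digamma terms, $\log(\sum_i\alpha_iw_i)$, polynomials in $\alpha$ and $w$), a single singular contribution $\log w_j$. Using $|\log w_j|\leq C_\eta w_j^{-\eta}$ for arbitrarily small $\eta>0$ on $w_j\in(0,1]$, and $w_j^{-\eta}\leq\prod_iw_i^{-\eta}$ since every $w_i^{-\eta}\geq 1$, I would conclude
$$\|\partial_\theta h_\theta(w)\|_\infty\leq c_3\prod_{i=1}^kw_i^{\alpha_i-1-\eta}$$
uniformly in $\theta\in\Theta_0$.

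With these bounds in hand, the natural candidates are $\beta_i^-=\alpha_{0,i}+\delta$ and $\beta_i^+=\alpha_{0,i}-\delta-\eta$: since $\alpha_i\leq\beta_i^-$ and $\alpha_i-\eta\geq\beta_i^+$ on $\Theta_0$ and $w_i\leq 1$, the two displays above translate directly into the lower bound $h_\theta(w)\geq B^-\prod w_i^{-1+\beta_i^-}$ and the upper bound $\|\partial_\theta h_\theta(w)\|_\infty\leq B^+\prod w_i^{-1+\beta_i^+}$. The main obstacle, and the step demanding the most care, will be the simultaneous calibration of $\delta,\eta,\varepsilon$ to secure $0<\beta_i^+<\beta_i^-<(1+\varepsilon)\beta_i^+$ together with $2\varepsilon<(\sum_i\beta_i^-)^{-1}$. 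These translate into the single requirement $\max_i(2\delta+\eta)/(\alpha_{0,i}-\delta-\eta)<\varepsilon<1/(2\sum_i(\alpha_{0,i}+\delta))$, whose left endpoint tends to $0$ while the right endpoint stays bounded below by a positive constant as $\delta,\eta\to 0$; hence the interval is nonempty for all sufficiently small $\delta$ and $\eta$, and the calibration is always feasible.
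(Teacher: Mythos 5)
Your proposal is correct and follows essentially the same route as the paper's proof: bound $h_\theta$ from below and $\partial_\theta h_\theta$ from above by constants times $\prod_i w_i^{\alpha_i-1}$ and $\prod_i w_i^{\alpha_i-1-\eta}$ respectively (absorbing the $\log w_j$ singularity into an arbitrarily small negative power), and then choose $\beta_i^\pm$ close to $\alpha_{0,i}$ so that the gap condition in B3) holds. Your version is in fact slightly more careful than the paper's, since you make the exponents $\beta_i^\pm$ uniform over the neighborhood $\Theta_0$ and spell out the joint calibration of $\delta$, $\eta$ and $\varepsilon$ explicitly.
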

\begin{proof}
  To verify condition B1) from Proposition we recall the angular density of the 
  Dirichlet distribution in \eqref{diri-spec}. From this, it is directly seen that
  for some positive and continuous function $C(\alpha_1,\ldots,\alpha_n)$
  $$h(w) \geq C(\alpha_1,\ldots,\alpha_n) \prod_{i=1}^k w_i^{-1+\alpha_i},$$
  since for any $(\alpha_1,\dots, \alpha_k)\in\Theta_0$ and $(w_1,\dots, w_k)\in S^{k-1}$
  the term $\sum_{j=1}^k \alpha_j w_j$ is uniformly bounded from below and above.
  Differentiating the function $h$ with respect to $\alpha_m$, for 
  $m\in\{1,\dots, k\}$,  we obtain the upper bound
  \begin{align*}
    |\partial_{\alpha_m} h(w)| &\leq C_1(\alpha_1,\ldots,\alpha_n) \prod_{i=1}^k w_i^{\alpha_i-1} + C_1(\alpha_1,\ldots,\alpha_n) w_m^{\alpha_m-1} |\log w_m|\prod_{i\neq n} w_i^{\alpha_i-1}\\
    &\leq C_1 \prod_{i=1}^k w_i^{-1+\alpha_i - \delta},
  \end{align*}
  for any $\delta > 0$ and a positive and continuous function $C_1(\alpha_1,\ldots,\alpha_n)>0$.
  Thus we get for some $\epsilon>0$ that
  $$0 < \beta_i^+ = \alpha_i - \delta < \alpha_i = \beta_i^- < (1+\epsilon) \beta_i^+,$$
  and when $\delta$ goes to 0 we can choose $\epsilon$ arbitrarily small.
\end{proof}

\subsection{Extremal-$t$ model and Schlather process}

\begin{lemma}\label{lem:ext}
  The extremal-$t$ model introduced in Section \ref{sec:ext} satisfies conditions 
  B1)--B3).
\end{lemma}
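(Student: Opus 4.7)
The plan is to verify conditions B1)--B3) of Proposition~\ref{prop:DQM2} directly from the explicit formula \eqref{extr_spec} for the angular density (and its analogue on lower-dimensional faces from \cite{R13}). Condition B1) is immediate because $f_\theta$ is strictly positive on $(0,\infty)^k$ for every positive definite correlation matrix $\Sigma$ and every $\nu>0$, so the support does not move with $\theta$. Condition B2) is also evident: in the formula for $h_{\theta,I}$, the prefactor $C(\Sigma,\nu)$ is a smooth function of its arguments, the quadratic form $Q(w)=(w^{1/\nu})^\top \Sigma^{-1} w^{1/\nu}$ is smooth in $\theta$ and bounded away from zero on the interior of each face, and the power $w_i^{(1-\nu)/\nu}$ is smooth in $\nu$.

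The core of the argument is condition B3). I would first establish two-sided bounds on $Q(w)$ uniformly over $\theta$ in a neighborhood $\Theta_0$ of $\theta_0$ and over $w$ in the interior of $S^{k-1}_I$: since $\lambda_{\min}(\Sigma^{-1})$ and $\lambda_{\max}(\Sigma^{-1})$ are positive and continuous in $\Sigma$, one has
\[
\lambda_{\min}(\Sigma^{-1})\sum_{i\in I} w_i^{2/\nu}\;\leq\; Q(w)\;\leq\;\lambda_{\max}(\Sigma^{-1})\sum_{i\in I} w_i^{2/\nu},
\]
and on the simplex, $\sum_{i\in I} w_i^{2/\nu}$ is bounded between two positive constants (use $\sum w_i=1$ implies $\max_i w_i\ge 1/|I|$ for the lower bound and $w_i\le 1$ for the upper bound). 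Hence $Q(w)^{-(|I|+\nu)/2}$ is bounded between two positive constants, and $C(\Sigma,\nu)$ likewise. This yields the lower bound
\[
h_{\theta,I}(w)\;\geq\; B^-\prod_{i\in I} w_i^{-1+1/\nu},
\]
so the natural choice is $\beta_i^-=1/\nu$ for $i\in I$.

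For the upper bound on $\partial_\theta h_{\theta,I}(w)$, I would differentiate the explicit formula componentwise in the entries of $\Sigma$ and in $\nu$. Derivatives in the entries of $\Sigma$ only act on $C(\Sigma,\nu)$ and on $Q(w)^{-(|I|+\nu)/2}$ via $\partial_{\Sigma_{ij}}\Sigma^{-1}$, which is smooth; by the bounds above these derivatives are dominated by a multiple of $\prod_{i\in I} w_i^{-1+1/\nu}$. Derivatives in $\nu$, however, bring in $\log w_i$ factors through $\partial_\nu w_i^{1/\nu}=-\nu^{-2}(\log w_i)w_i^{1/\nu}$ and $\partial_\nu w_i^{(1-\nu)/\nu}=-\nu^{-2}(\log w_i)w_i^{(1-\nu)/\nu}$, as well as a $\log Q(w)$ term. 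Using the elementary bound $|\log w_i|\le C_\delta\, w_i^{-\delta}$ valid on $(0,1]$ for any $\delta>0$, all these logarithmic factors can be absorbed at the cost of a small exponent loss, yielding
\[
\|\partial_\theta h_{\theta,I}(w)\|_\infty\;\leq\; B^+\prod_{i\in I} w_i^{-1+1/\nu-\delta},
\]
so $\beta_i^+=1/\nu-\delta$.

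It remains to check the compatibility constraints from B3): $0<\beta_i^+<\beta_i^-<(1+\varepsilon)\beta_i^+$ with $2\varepsilon\sum_i\beta_i^-<1$. The ratio $\beta_i^-/\beta_i^+=(1/\nu)/(1/\nu-\delta)$ tends to $1$ as $\delta\downarrow 0$, so given any $\varepsilon\in(0,\nu/(2|I|))$ one can pick $\delta$ small enough to make both inequalities hold, and $2\varepsilon|I|/\nu<1$ is then automatic. The main obstacle I anticipate is the bookkeeping for lower-dimensional faces $S^{k-1}_I$ with $|I|<k$: one has to check that the analogue of formula \eqref{extr_spec} derived from \cite{R13} enjoys the same structure (a smooth prefactor, a nondegenerate quadratic form in $w_I^{1/\nu}$, and the product $\prod_{i\in I}w_i^{(1-\nu)/\nu}$), and that the bounds on $Q$ remain uniform in $\theta\in\Theta_0$ when one passes to the relevant Schur complements of $\Sigma$. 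Once this is done, the argument above applies face by face.
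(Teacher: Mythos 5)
Your proposal follows essentially the same route as the paper's proof: two-sided eigenvalue bounds on the quadratic form $\left(w^{1/\nu}\right)^\top \Sigma^{-1} w^{1/\nu}$ combined with the boundedness of $\sum_i w_i^{2/\nu}$ on the simplex to get $h_{\theta,I}(w)\asymp \prod_i w_i^{-1+1/\nu}$, hence $\beta_i^-=1/\nu$, and absorption of the logarithmic factors arising from the $\nu$-derivative into a small power loss, giving $\beta_i^+=1/\nu-\delta$ with $\delta$ chosen small enough to satisfy the constraints of B3). The paper likewise treats only the face $I=\{1,\ldots,k\}$ and asserts the other faces are analogous, so your flagged concern about lower-dimensional faces is handled there in exactly the same (brief) way.
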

\begin{proof}
   As the angular density $h_I$ is of the same form \eqref{extr_spec} for every
   face $S_I^{k-1}$, $I \subset \{1,\ldots,k\}$, we restrict ourselves to the 
   case $I=\{1,\ldots,k\}$. The verification works analogously for the other faces.

   The angular density $h = h_{\{1,\ldots,k\}}$ in \eqref{extr_spec} is strictly 
   positive and continuously differentiable both with respect to $w$ and 
   $\theta=(\Sigma,\nu)$, and it satisfies conditions B1) and B2). As $\Sigma$ is 
   positive definite, we have that both the minimal and the maximal eigenvalue
   $\sigma_{\min}$ and $\sigma_{\max}$ of $\Sigma^{-1}$, respectively, are positive.
   Consequently,
   $$ \sigma_{\min} \sum_{i=1}^k w_i^{2/\nu} \leq \left(w^{1/\nu}\right)^\top \Sigma^{-1} w^{1/\nu} \leq \sigma_{\max} \sum_{i=1}^k w_i^{2/\nu}.$$
   With $\sum_{i=1}^k w_i = 1$, we have that $\sum_{i=1}^n w_i^{2/\nu}$ can be 
   bounded from below and above and, thus, there exist positive continuous 
   functions $C_1(\Sigma,\nu)$ and $C_2(\Sigma,\nu)$ such that
   $$ C_1(\Sigma,\nu) \cdot \prod_{i=1}^k w_i^{\frac{1-\nu}{\nu}} \leq h(w) \leq C_2(\Sigma,\nu) \cdot \prod_{i=1}^k w_i^{\frac{1-\nu}{\nu}}.$$
   Further, for the partial derivatives w.r.t.\ the components of $\Sigma$, we
   obtain that
   \begin{align*}
    \partial_\Sigma h(w) ={}& \left\{\partial_\Sigma C(\Sigma,\nu)\right\} \cdot \left\{\left(w^{1/\nu}\right)^\top \Sigma^{-1} w^{1/\nu}\right\}^{-\frac{k+\nu}2} \prod_{i=1}^k w_i^{\frac{1-\nu}{\nu}}\\
                      & - C(\Sigma,\nu) \cdot \frac{k+\nu} 2 \left(w^{1/\nu}\right)^\top \left(\partial_\Sigma \Sigma^{-1}\right) w^{1/\nu} \cdot
                         \left\{\left(w^{1/\nu}\right)^\top \Sigma^{-1} w^{1/\nu}\right\}^{- \frac{k+\nu} 2 - 1} \prod_{i=1}^k w_i^{\frac{1-\nu}{\nu}}.
   \end{align*}
   As $\left|(w^{1/\nu})^\top \left(\partial_\Sigma \Sigma^{-1}\right) w^{1/\nu}\right|$
   can be bounded by the product of the largest eigenvalue of 
   $\partial_\Sigma \Sigma^{-1}$ and $\sum_{i=1}^k w_i^{2/\nu}$, and the latter 
   can again be bounded by a constant, we obtain that
   $$ \left|\partial_\Sigma h(w)\right| \leq C_3(\Sigma,\nu) \cdot \prod_{i=1}^k w_i^{\frac{1-\nu}{\nu}}$$
   for some positive continuous function $C_3(\Sigma,\nu)$.
   
    For the partial derivative of the angular density w.r.t.\ $\nu$, we get
   \begin{align*}
   \partial_\nu h(w) ={}& \left\{\partial_\nu C(\Sigma,\nu)\right\} \cdot \left\{\left(w^{1/\nu}\right)^\top \Sigma^{-1} w^{1/\nu}\right\}^{-\frac{k+\nu} 2} \prod_{i=1}^k w_i^{\frac{1-\nu}{\nu}}\\
               & + \frac{C(\Sigma,\nu)}{2\nu^2} \cdot f\left(\Sigma,\nu,w\right) \cdot \left\{\left(w^{1/\nu}\right)^\top \Sigma^{-1} w^{1/\nu}\right\}^{-\frac{k+\nu} 2} \prod_{i=1}^k w_i^{\frac{1-\nu}{\nu}}\\
               & + \frac{C(\Sigma,\nu)}{\nu^2} \cdot \left\{\left(w^{1/\nu}\right)^\top \Sigma^{-1} w^{1/\nu}\right\}^{-\frac{k+\nu} 2} \cdot \log\left(\prod_{i=1}^k w_i\right) 
                           \cdot \prod_{i=1}^k w_i^{\frac{1-\nu}{\nu}}
   \end{align*}
   where
   $$ f(\Sigma,\nu,w) = - \frac 1 2 \log\left\{\left(w^{1/\nu}\right)^\top \Sigma^{-1} w^{1/\nu}\right\} 
                        + \frac{k + \nu}{2 \nu^2} \cdot \frac{\sum_{i=1}^k \sum_{j=1}^k \Sigma^{-1}_{ij} \cdot \left(w_i w_j\right)^{1/\nu} \cdot \log(w_i w_j)}{\left(w^{1/\nu}\right)^\top \Sigma^{-1} w^{1/\nu}}. $$
   As the function $x \mapsto x^{1/\nu} \log(x)$ is bounded on $(0,1]$ and
   $$ \left|\log\left\{\left(w^{1/\nu}\right)^\top \Sigma^{-1} w^{1/\nu}\right\}\right| \leq \max\left\{\left|\log\left(\sigma_{\min} \sum_{i=1}^k w_i^{2/\nu}\right)\right|, 
                                                                                           \left|\log\left(\sigma_{\max} \sum_{i=1}^k w_i^{2/\nu}\right)\right|\right),$$
   we obtain that $f(\Sigma,\nu,\cdot)$ can be bounded from above, i.e.\ there
   exists a positive continuous function $C_4(\Sigma,\nu)$ such that
   $$ |\partial_\nu h(w)| \leq{} C_4(\Sigma,\nu) \cdot \left\{1-\log\left(\prod_{i=1}^k w_i\right)\right\} \cdot \prod_{i=1}^k w_i^{\frac{1-\nu}{\nu}}
                          \leq{} \frac{C_4(\Sigma,\nu)}\varepsilon \prod_{i=1}^k w_i^{\frac{1-\nu}{\nu}-\varepsilon}$$
   for all $\varepsilon >0$. Condition B3) follows from the fact that $C_1$, $C_2$, $C_3$ and $C_4$ are continuous.
\end{proof}   

\subsection{H\"usler--Reiss model and Brown--Resnick process}

\begin{lemma}\label{lem:hr}
  The H\"usler--Reiss  model introduced in Section \ref{sec:hr} satisfies 
  conditions A1)--A3).
\end{lemma}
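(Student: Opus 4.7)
The plan is to verify A1)--A3) directly, working from the Gaussian representation \eqref{exp_HR} of $V_\Lambda$. Conditions A1) and A2) are essentially immediate. For A1), every Gaussian CDF appearing in \eqref{exp_HR} is strictly positive on $\mathbb{R}^{k-1}$, hence by \eqref{eq:llh_Z} the density $f_\Lambda$ is strictly positive on $(0,\infty)^k$, so the support $S_\Lambda = (0,\infty)^k$ is independent of $\Lambda$. For A2), iterated differentiation of \eqref{exp_HR} in $z$ shows that $\partial_{\{1,\ldots,k\}} V_\Lambda(z)$ is proportional to a single $(k-1)$-dimensional Gaussian density, evaluated at an argument that is affine in $\log(z_j/z_i)$ and with covariance $R^{(i)}$, both smooth in $\Lambda$. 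Joint continuity of $\partial_\Lambda \partial_{\{1,\ldots,k\}} V_\Lambda$ on $\Theta_0 \times (0,\infty)^k$ follows at once.

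The substance lies in A3). By homogeneity of $V_\Lambda$ and $\partial_{\tau_j} V_\Lambda$ (orders $-1$ and $-1-|\tau_j|$, respectively), both $\partial_\Lambda \log V_\Lambda(z)$ and $\partial_\Lambda \log \partial_{\tau_j} V_\Lambda(z)$ are zero-homogeneous in $z$, so it suffices to bound them uniformly in $\Lambda \in \Theta_0$ on the set $\|z\|_\infty = 1$. For each nonempty $I \subseteq \{1,\ldots,k\}$ I would use the factorization of $\partial_I V_\Lambda$ standard in the Hüsler--Reiss literature (see e.g.\ \citet{nik2009}, \citet{wadsworth14}): fixing a reference index $m \in I$, one has
\[
-\partial_I V_\Lambda(z) = \psi_{I,m}(z,\Lambda)\, \varphi_{|I|-1}(y; R)\, \Phi_{k-|I|}(\widetilde y; \widetilde R),
\]
where $\psi_{I,m}$ is the product of $z_m^{-1-|I|}$ with a scalar smooth and bounded on $\Theta_0$; $y$ and $\widetilde y$ are affine in $(\log(z_\ell/z_m))_{\ell \neq m}$ with coefficients smooth in $\Lambda$; and $R,\widetilde R$ are strictly positive definite matrices smooth in $\Lambda$.

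Taking $\partial_\Lambda \log$ of this expression produces three types of contributions: (i) derivatives of the Mahalanobis exponent in $\varphi_{|I|-1}$, which are polynomial in $y$ and hence in $\max_\ell |\log z_\ell|$; (ii) derivatives of the scalar prefactor and of $\log\det R$, uniformly bounded on $\Theta_0$; and (iii) $\partial_\Lambda \log \Phi_{k-|I|}(\widetilde y; \widetilde R)$. The main obstacle is (iii), since components of $\widetilde y$ may tend to $-\infty$ as some $z_\ell \to 0$ and both $\Phi$ and its derivatives then degenerate. The key technical input is a multivariate Mill's-ratio-type estimate of the form
\[
\bigl|\partial_\Lambda \log \Phi_p(\widetilde y; \widetilde R)\bigr| \leq C_{\widetilde R}\bigl(1 + \|\widetilde y\|_\infty\bigr)^{M},
\]
uniform for $\widetilde R$ in a compact family of positive definite matrices, for some $M=M(p)$; such a bound follows from Plackett's formula expressing $\partial_y \Phi_p$ and $\partial_R \Phi_p$ in terms of lower-dimensional Gaussian quantities, combined with the elementary univariate estimate $\varphi(t)/\Phi(t) \leq c(1+|t|)$ applied in the worst coordinate direction.

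Combining (i)--(iii) yields a bound of the form $\|\partial_\Lambda \log \partial_{\tau_j} V_\Lambda(z)\|_\infty \leq C\bigl(1 + \max_\ell |\log z_\ell|\bigr)^M$ on $\|z\|_\infty = 1$. Since $|\log z_\ell|^M \leq C_{M,\alpha}\, z_\ell^{-\alpha}$ on $(0,1]$ for every $\alpha > 0$, this is dominated by $A \sum_{\ell=1}^k z_\ell^{-\alpha}$ for any chosen $\alpha \in (0,1/2)$, which is precisely the shape required by A3). The bound on $\|\partial_\Lambda \log V_\Lambda(z)\|_\infty$ is obtained by the same method applied term by term to \eqref{exp_HR}, using $V_\Lambda(z) \geq z_{i^*}^{-1} \Phi_{k-1}(\cdots)$ for a suitably chosen index $i^*$ to bound the denominator from below. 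This completes the verification of A1)--A3), and the asymptotic normality statement of Proposition~\ref{prop:cond-hr} then follows from Proposition~\ref{prop:DQM1} and Theorem~\ref{prop:mle}.
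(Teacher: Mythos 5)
Your overall architecture coincides with the paper's: conditions A1)--A2) are dispatched as routine, A3) is reduced by zero-homogeneity to the sphere, $\partial_\tau V_\Lambda$ is factored as a prefactor times $\varphi_{|\tilde\tau|}$ times $\Phi_{|\tau^c|}$, the logarithmic derivative splits into a bounded piece, a piece polynomial in $\log(z_i/z_1)$, and the term $\partial_\Lambda \log \Phi_{|\tau^c|}(\tilde y;\widetilde R)$, and the final domination $|\log z_\ell|^M \leq C_{M,\alpha} z_\ell^{-\alpha}$ closes the argument exactly as in the paper. You have also correctly identified the crux and stated the right target estimate, namely a bound $\bigl|\partial_\Lambda \log \Phi_p(\tilde y;\widetilde R)\bigr| \leq C(1+\|\tilde y\|_\infty)^M$ that is uniform over a compact family of positive definite $\widetilde R$.

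The genuine gap is your justification of that target estimate. Asserting that it ``follows from Plackett's formula combined with the elementary univariate estimate $\varphi(t)/\Phi(t)\leq c(1+|t|)$ applied in the worst coordinate direction'' is not a proof, and it is precisely the step where all the work lies. Plackett's formula reduces $\partial_{R_{ij}}\Phi_p$ and $\partial_{y_i}\Phi_p$ to quantities of the form $\varphi_2(y_i,y_j;R_{\{i,j\}})\,\Phi_{p-2}(\cdot\mid\cdot)$ and $\varphi_1(y_i)\,\Phi_{p-1}(\cdot\mid\cdot)$, but the resulting ratios against $\Phi_p(y;R)$ do not reduce to products of univariate Mill's ratios: for general (possibly negatively correlated) $\widetilde R$ there is no product-type lower bound on the orthant probability, the conditioning points in the conditional CDFs can drift to $+\infty$ or $-\infty$ depending on the sign pattern of $\widetilde R^{-1}$, and the components of $\tilde y$ may diverge at different rates, so ``the worst coordinate direction'' is not well defined. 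Even in the simplest case $p=2$ with $y_1=y_2=t\to-\infty$, verifying that $\varphi_2(t,t;\rho)/\Phi_2(t,t;\rho)$ grows only quadratically already requires the bivariate tail asymptotic $\Phi_2(t,t;\rho)\asymp t^{-2}\exp\{-t^2/(1+\rho)\}$, not the univariate bound. The paper fills this hole with a dedicated argument: it reduces the problem to the truncated-moment ratio \eqref{eq:ratio-hr}, splits into the cases $b_{\min}>-1$ and $b_{\min}<-1$, uses the elementary inequality $\|x\|_2^2\leq (2b_{\min}^2/\sigma_{\min})\exp\{\sigma_{\min}\|x\|^2/(2b_{\min}^2)\}$ together with H\"older's inequality applied with the $b_{\min}$-dependent exponents $(1-2/b_{\min}^2)^{-1}$ and $b_{\min}^2/2$, and an explicit lower bound on $\int_{-\infty}^{b}\exp(-\tfrac12 x^\top \widehat R^{-1}x)\,\mathrm{d}x$; the exponent $2/b_{\min}^2\to 0$ is what neutralizes the Gaussian factors and leaves a bound polynomial in $|b_{\min}|$. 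Some argument of this calibre (or an appeal to a precise multivariate Mill's-ratio result with uniformity in $\widetilde R$) is needed; as written, your key lemma is stated but not established, so the proof is incomplete at its central step.
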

\begin{proof} 
   We check condition A3) of Proposition \ref{prop:DQM1}. We recall the form of 
   the exponent function of the $k$-variate H\"usler--Reiss distribution in 
   \eqref{exp_HR} that is parameterized by the conditionally negative definite
   parameter matrix $\Lambda = \{\lambda^2_{i,j}\}_{1 \leq i,j \leq k}$ with 
   $\lambda^2_{i,i} = 0$, $i\in\{1,\dots, k\}$. Taking the derivative with respect
   to $\lambda^2_{j,m}$, $j\neq m$, yields
  \begin{align}\label{fraction_eq}
    \partial_{\lambda^2_{j,m}} \log V_\Lambda(z_1,\dots, z_k) = \frac{\sum_{i=1}^k z_i^{-1} \partial_{\lambda^2_{j,m}} \Phi_{k-1}\left( 2\lambda^2_{i,-i}  + \log(z_{-i}/z_i); R^{(i)}\right)}{V_\Lambda(z_1,\dots, z_k)}
  \end{align}
  Using the fact that the density $\varphi_{k-1}(\cdot; R^{(i)})$ can be bounded
  locally uniformly in $\lambda_{j,m}^2$ by an integrable function, we may 
  exchange the order of integration and differentiation when differentiating
  $$  \Phi_{k-1}\left( 2\lambda^2_{i,-i}  + \log(z_{-i}/z_i) \right)
  = \int_{-\infty}^{\log(z_{-i}/z_i)} \varphi_{k-1}\left(x_{-i} + 2\lambda^2_{i,-i}; R^{(i)}\right) \mathrm d x_{-i}.$$
  For $i\notin \{j,m\}$, we compute
  \begin{align*}
  &\partial_{\lambda^2_{j,m}} \Phi_{k-1}\left( 2\lambda^2_{i,-i} +
  \log(z_{-i}/z_i); R^{(i)}\right) = \int_{-\infty}^{\log(z_{-i}/z_i)}
  \partial_{\lambda^2_{j,m}}\varphi_{k-1}\left(x_{-i} + 2\lambda^2_{i,-i};
  R^{(i)}\right) \mathrm d x_{-i}\\ & =
  \int_{-\infty}^{\log(z_{-i}/z_i)} \partial_{\lambda^2_{j,m}}
  (2\pi)^{(1-k)/2} |R^{(i)}|^{-1/2} \exp\left\{- \frac12 (x_{-i} +
  2\lambda^2_{i,-i})^\top (R^{(i)})^{-1}(x_{-i} +
  2\lambda^2_{i,-i})\right\} \mathrm d x_{-i}\\ & =
  \int_{-\infty}^{\log(z_{-i}/z_i)} \left( -\frac12 (x_{-i} +
  2\lambda^2_{i,-i})^\top \partial_{\lambda^2_{j,m}}(R^{(i)})^{-1}(x_{-i}
  + 2\lambda^2_{i,-i}) - \frac{\partial_{\lambda^2_{j,m}}
    |R^{(i)}|}{2|R^{(i)}|} \right)\\
  & \hspace{9cm} \cdot \varphi_{k-1}\left(x_{-i} +
  2\lambda^2_{i,-i}; R^{(i)}\right) \mathrm d x_{-i},
  \end{align*}
  where $|A|$ denotes the absolute value of the determinant of a matrix $A$.
  Using the eigenvalue upper bound for $(x_{-i} + 2\lambda^2_{i,-i})^\top \partial_{\lambda^2_{j,m}}(R^{(i)})^{-1}(x_{-i} + 2\lambda^2_{i,-i})$ 
  and the lower bound for $(x_{-i} + 2\lambda^2_{i,-i})^\top (R^{(i)})^{-1}(x_{-i} + 2\lambda^2_{i,-i})$, 
  where we denote the smallest eigenvalue of $(R^{(i)})^{-1}$ by 
  $\mu^{(i)}_{min}>0$, we obtain 
  \begin{align}
  \notag&\left|\partial_{\lambda^2_{j,m}} \Phi_{k-1}\left( 2\lambda^2_{i,-i}  + \log(z_{-i}/z_i); R^{(i)}\right)\right|\\
  \notag&\leq C'(\Lambda) \int_{-\infty}^{\log(z_{-i}/z_i)}  
  \left( \sum_{l\neq i} (x_{l} + 2\lambda^2_{i,l})^2 + 1  \right) \varphi_1\left\{\mu^{(i)}_{min}(x_{l} + 2\lambda^2_{i,l}); 1\right\} \mathrm d x_{-i}\\
  \label{boundjm}&\leq C''(\Lambda), 
  \end{align}  
  since the normal distribution has moments of all order. Here $C'(\Lambda)$ and
  $C''(\Lambda)$  are continuous functions. If $i \in\{j,m\}$, the calculations
  are similar with some additional linear terms in $x_{-i}$ appearing in the 
  derivative, which are dominated by the quadratic terms. Thus, for all $j\neq m$
  and all $i=1,\dots, k$, we have the bound \eqref{boundjm}. It suffices to
  recall the general bound
  $$ V_\Lambda(z_1,\dots, z_k) \geq 1 /\min(z_1,\dots, z_k),$$
  and to multiply numerator and denominator in equation \eqref{fraction_eq} by
  $\min(z_1,\dots, z_k)$ to conclude that
  $$\| \partial_{\Lambda} \log V_\Lambda(z_1,\dots, z_k) \| \leq C(\Lambda),$$
  for some continuous function $C(\Lambda)$. 

  Suppose without loss of generality that $1 \in \tau\subset \{1,\dots, k\}$. We
  introduce the notation $\tilde \tau = \tau\setminus\{1\}$, $\tau^c = 
  \{1,\dots, k\} \setminus \tau$, and we let $R$ be the  matrix with $(j,m)$th
  entry $2(\lambda^2_{1,j} + \lambda^2_{1,m} - \lambda^2_{j,m})$, $j,m =1,\dots,k$. 
  The partial derivative of $V_\Lambda$ with respect to $\tau$ can be represented as
  \citep[cf.][]{asa2015}
  \begin{align*}
   \partial_{\tau} V_\Lambda(z_1,\dots, z_k) = \frac{1}{z_1^2\prod_{i\in \tilde\tau} z_i}
   \varphi_{|\tilde\tau|}(z^*_{\tilde\tau}; R_{\tilde\tau,\tilde\tau})
   \Phi_{|\tau^c|}(z^*_{\tau^c} - \hat \mu; \hat R),
  \end{align*}
  where $z^* = \log(z/z_1) + 2\lambda^2_{\{1,\dots, k\} ,1}$, and   
  \begin{align*}
    \hat \mu &= R_{\tau^c,\tilde\tau} R^{-1}_{\tilde\tau,\tilde\tau}  z^*_{\tilde\tau}, \qquad   \hat R = R_{\tau^c,\tau^c} - R_{\tau^c,\tilde\tau}R^{-1}_{\tilde\tau,\tilde\tau}R_{\tilde\tau,\tau^c},
  \end{align*}
  are the 
  conditional mean and covariance matrix, respectively. We now consider for $j\neq m$
  \begin{align*}
   &\partial_{\lambda^2_{j,m}} \log \partial_{\tau} V_\Lambda(z_1,\dots, z_k) =\\ 
   &-\frac12 \partial_{\lambda^2_{j,m}}  \log|R| -   \frac12 \partial_{\lambda^2_{j,m}} \left\{(z^*_{\tilde\tau})^\top R_{\tilde\tau,\tilde\tau}^{-1}z^*_{\tilde\tau}\right\}
    + \partial_{\lambda^2_{j,m}} \log \Phi_{|\tau^c|}(z^*_{\tau^c} - \hat \mu; \hat R).
  \end{align*}
  Suppose from now on that $\|z\| = 1$. The absolute value of the second summand of 
  the right hand side can be bounded, up to  a continuous function of $\Lambda$,
  by the product of the maximal eigenvalue of $\partial_{\lambda^2_{j,m}}R_{\tilde\tau,\tilde\tau}^{-1}$
  and 
  $$\|z^*_{\tilde\tau}\|_2^2 = \sum_{i\in\tilde\tau} \left\{\log(z_i/z_1) + 2\lambda^2_{i ,1}\right\}^2
  \leq C(\Lambda) \sum_{i\in\tilde\tau} \max(z_1/z_i,z_i/z_1)^\epsilon\leq C'(\Lambda) \sum_{i=1}^k {z_i}^{-\epsilon}.$$

  For the third term, we first note that, with a similar argument as above, we may 
  exchange the order of integration and differentiation. We compute
  \begin{align*}
  & \partial_{\lambda^2_{j,m}} \Phi_{|\tau^c|}(z^*_{\tau^c} - \hat \mu;\hat R)\\ 
  & =
  \int_{-\infty}^{\log(z_{\tau^c}/z_1)} \left( -\frac12 \partial_{\lambda^2_{j,m}} \left\{(x_{\tau^c} + 2\lambda^2_{\tau^c,1} - \hat \mu)^\top \hat R^{-1}(x_{\tau^c} + 2\lambda^2_{\tau^c,1} - \hat \mu)\right\} - \frac{\partial_{\lambda^2_{j,m}}
    | \hat R|}{2|\hat R|} \right)\\
  &\hspace{9cm}\cdot \varphi_{|\tau^c|}\left((x_{\tau^c} + 2\lambda^2_{\tau^c,1} - \hat \mu) ; \hat R\right) \mathrm d x_{\tau^c}.
  \end{align*} 
  We obtain the bound
  \begin{align*}
   &\left| \frac12 \partial_{\lambda^2_{j,m}} (x_{\tau^c} + 2\lambda^2_{\tau^c,1} - \hat \mu)^\top \hat R^{-1}(x_{\tau^c} + 2\lambda^2_{\tau^c,1} - \hat \mu) - \frac{\partial_{\lambda^2_{j,m}}
    | \hat R|}{2|\hat R|} \right|\\
  &\hspace{7cm}\leq C(\Lambda) \sum_{i\in \tilde\tau} \left(|\log(z_i/z_1)|^2 + 1\right) \sum_{i\in \tau^c} \left( x_i^2 + 1\right),
  \end{align*}
  since the left hand side is quadratic in both $x_{\tau^c}$ and $z^*_{\tilde\tau}$.
  Thus, we have
  \begin{align*}
  & \left| \partial_{\lambda^2_{j,m}} \Phi_{|\tau^c|}(z^*_{\tau^c} - \hat \mu;\hat R)\right|\\
  &\leq C(\Lambda) \sum_{i\in \tilde\tau} \left(|\log(z_i/z_1)|^2 + 1\right) \cdot \\
    & \qquad \cdot \sum_{i \in \tau^c} 
    \bigg[\int_{-\infty}^{z^*_{\tau^c} - \hat \mu} 
       \left( x_i - 2 \lambda^2_{\tau^c,1} +  \hat \mu\right)^2 \varphi_{|\tau^c|}(x_{\tau^c}; \hat R) \, \mathrm d x_{\tau^c} + \Phi_{|\tau^c|}(z^*_{\tau^c} - \hat \mu; \hat R)\bigg] \displaybreak[0]\\
  &\leq C(\Lambda) \sum_{i\in \tilde\tau} \left(|\log(z_i/z_1)|^2 + 1\right) |\tau^c| \cdot 
    \left[1 + 2 (2\lambda^2_{\tau^c,1} - \hat \mu)^2\right] 
        \cdot \Phi_{|\tau^c|}(z^*_{\tau^c} - \hat \mu; \hat R) \\
  & \qquad + 2 C(\Lambda) \sum_{i\in \tilde\tau} \left(|\log(z_i/z_1)|^2 + 1\right) 
       \sum_{i \in \tau^c} \int_{-\infty}^{z^*_{\tau^c} - \hat \mu} 
             x_i^2 \varphi_{|\tau^c|}(x_{\tau^c}; \hat R) \, \mathrm d x_{\tau^c}            
  \end{align*}
  To summarize, there are continuous functions $C_1(\Lambda), C_2(\Lambda) > 0$ such that
  \begin{align*}
   \left| \partial_{\lambda^2_{j,m}} \log \Phi_{|\tau^c|}(z^*_{\tau^c} - \hat \mu;\hat R)\right|& \leq{} C_1(\Lambda) \cdot |\tau^c| \cdot \sum_{i \in \tilde \tau} \left(|\log(z_i/z_1)|^4 + 1\right) \\
   & + C_2(\Lambda) \sum_{i \in \tilde \tau} \left(|\log(z_i/z_1)|^2 + 1\right) 
         \frac{\int_{-\infty}^{z^*_{\tau^c} - \hat \mu} \|x_{\tau^c}\|_2^2 \varphi_{|\tau^c|}(x_{\tau^c}; \hat R) \, \mathrm d x_{\tau^c}}
              {\int_{-\infty}^{z^*_{\tau^c} - \hat \mu} \varphi_{|\tau^c|}(x_{\tau^c}; \hat R) \, \mathrm d x_{\tau^c}}
  \end{align*}
  In the following, we will determine an upper bound for the ratio 
  \begin{equation} \label{eq:ratio-hr}
  \frac{\int_{-\infty}^{z^*_{\tau^c} - \hat \mu} \|x_{\tau^c}\|^2 \varphi_{|\tau^c|}(x_{\tau^c}; \hat R) \, \mathrm d x_{\tau^c}}
                {\int_{-\infty}^{z^*_{\tau^c} - \hat \mu} \varphi_{|\tau^c|}(x_{\tau^c}; \hat R) \, \mathrm d x_{\tau^c}}.
  \end{equation}              
  First, we consider the case that $z^*_{i} - R_{i,\tilde\tau} R^{-1}_{\tilde\tau,\tilde\tau} z^*_{\tilde\tau} > -1$ 
  for all $i \in \tau^c$. Then, \eqref{eq:ratio-hr} can be bounded by
  $\mathbb{E}(\sum_{i \in \tau^c} X_i^2) / \mathbb{P}(\max_{i \in \tau^c} X_i \leq -1)$
  where $(X_i)_{i \in \tau^c} \sim \mathcal{N}(0,\hat R)$, i.e., a positive continuous
  function of $\Lambda$. 

  Now assume that $b_{\min} = \min_{i \in \tau^c} z^*_{i} - R_{i,\tilde\tau} R^{-1}_{\tilde\tau,\tilde\tau} z^*_{\tilde\tau} < -1$.
  Then, for all $x_{\tau^c}$ in the domain of integration in \eqref{eq:ratio-hr}, we have
  \begin{align*}
  \|x_{\tau^c}\|_2^2 \leq{}& 2 \frac{b_{\min}^2}{\sigma_{\min}} \cdot \exp\left(\frac12 \frac{\sigma_{\min} }{b_{\min}^2} \|x_{\tau^c}\|^2\right)
                 {}\leq{}  2 \frac{b_{\min}^2}{\sigma_{\min}} \cdot \exp\left(\frac12 \frac{1}{b_{\min}^2} x_{\tau^c}^\top \hat R^{-1} x_{\tau^c}\right) 
  \end{align*}
  where $\sigma_{\min}$ denotes the smallest eigenvalue of $\hat R^{-1}$. Thus, we 
  obtain the bound
  \begin{align*}
  & \frac{\int_{-\infty}^{z^*_{\tau^c} - \hat \mu} \|x_{\tau^c}\|^2 \varphi_{|\tau^c|}(x_{\tau^c}; \hat R) \, \mathrm d x_{\tau^c}}
                {\int_{-\infty}^{z^*_{\tau^c} - \hat \mu} \varphi_{|\tau^c|}(x_{\tau^c}; \hat R) \, \mathrm d x_{\tau^c}}\\
  \leq{}& \frac{2 b_{\min}^2}{\sigma_{\min}} 
           \frac{\int_{-\infty}^{z^*_{\tau^c} - \hat \mu} 
                     \exp\left(- \frac 1 2 \left[1- \frac{1}{b_{\min}^2}\right] x_{\tau^c}^\top \hat R^{-1} x_{\tau^c}\right) \, \mathrm d x_{\tau^c}}
                {\int_{-\infty}^{z^*_{\tau^c} - \hat \mu} \exp\left(- \frac 1 2 x_{\tau^c}^\top \hat R^{-1} x_{\tau^c}\right)  \, \mathrm d x_{\tau^c}}\\
  ={}& \frac{2 b_{\min}^2}{\sigma_{\min}} 
           \frac{\int_{-\infty}^{z^*_{\tau^c} - \hat \mu} 
                     \exp\left(- \frac 1 2 \left[1- \frac{2}{b_{\min}^2}\right] x_{\tau^c}^\top \hat R^{-1} x_{\tau^c}\right)
                     \exp\left(- \frac 1 2 \frac{1}{b_{\min}^2} x_{\tau^c}^\top \hat R^{-1} x_{\tau^c}\right)  \, \mathrm d x_{\tau^c}}
                {\int_{-\infty}^{z^*_{\tau^c} - \hat \mu} 
                     \exp\left(- \frac 1 2 x_{\tau^c}^\top \hat R^{-1} x_{\tau^c}\right)  \, \mathrm d x_{\tau^c}} \displaybreak[0]\\
  \leq{}& \frac{2 b_{\min}^2}{\sigma_{\min}}  \left[\int_{-\infty}^{z^*_{\tau^c} - \hat \mu} 
                     \exp\left(- \frac 1 2 x_{\tau^c}^\top \hat R^{-1} x_{\tau^c}\right)  \, \mathrm d x_{\tau^c} \right]^{-2/b_{\min}^2} \\
                    & \hspace{3cm} \cdot
                     \left[\int_{-\infty}^{z^*_{\tau^c} - \hat \mu} 
                     \exp\left(- \frac 1 4 x_{\tau^c}^\top \hat R^{-1} x_{\tau^c}\right)  \, \mathrm d x_{\tau^c} \right]^{2/b_{\min}^2},       
  \end{align*}
  where we used H\"older's inequality. As $b_{\min}^2 > 1$, the last factor can be 
  bounded by a constant independent from $b_{\min}$. Noting that there is a 
  constant $c(\Lambda)>0$ such that 
  $\int_{-\infty}^b \exp(-\frac 1 2 \sigma_{\min} x^2)\,\mathrm{d}x
  \geq |b|^{-1}c(\Lambda)\exp(-\frac 1 2 \sigma_{\min} b^2)$ for all $b < -1$,
  we further get
  \begin{align*}
   & \left[\int_{-\infty}^{z^*_{\tau^c} - \hat \mu} 
                     \exp\left(- \frac 1 2 x_{\tau^c}^\top \hat R^{-1} x_{\tau^c}\right)  \, \mathrm d x_{\tau^c} \right]^{-2/b_{\min}^2} \\
  \leq{}& \left[\int_{(-\infty,b_{\min}) \times \ldots \times (-\infty,b_{\min})} 
                  \exp\left(-\frac 1 2 \sigma_{\max} \sum\nolimits_{i \in \tau^c} x_i^2\right) \, \mathrm{d} x_{\tau^c}\right]^{-2/b_{\min}^2}\\
  \leq{}& \left[ \frac{c(\Lambda)}{|b_{\min}|} \exp\left(- \frac 12 \sigma_{\max} b_{\min}^2\right)\right]^{-2|\tau^c|/b_{\min}^2}
  {}\leq{} \max\{1,1/c(\Lambda)\}^{2\tau^c} |b_{\min}|^{2\tau^c} \exp\left(\sigma_{\max} |\tau^c|\right).
  \end{align*}
  As $b_{\min}^2 \leq \tilde c(\Lambda) (1 + \sum_{i=1}^k |\log(z_i/z_1)|^2)$ for 
  an appropriate constant $\tilde c(\Lambda)$, there exists a constant $K(\Lambda,\tau)$
  depending continuously on $\Lambda$ such that
  \begin{align*}
   \left| \partial_{\lambda^2_{j,m}} \log \Phi_{|\tau^c|}(z^*_{\tau^c} - \hat \mu;\hat R)\right|
  \leq{}& K(\Lambda,\tau) \cdot \left(1 + \sum_{i=1}^k |\log(z_i/z_1)|^{2+2|\tau^{C}|}\right) \\
  \leq{} K(\Lambda,\tau) \cdot \frac 2 \varepsilon \sum_{i=1}^k \max\left\{\frac{z_i}{z_1}, \frac{z_1}{z_i}\right\}^\varepsilon
  \leq{}& K(\Lambda,\tau) \cdot \frac {2k} \varepsilon \sum_{i=1}^k \left(\frac{z_i}{\|z\|_\infty}\right)^{-\varepsilon}
  \end{align*}
  for all $\varepsilon>0$.
\end{proof}
  
\end{document}